\newtheorem{theorem}{Theorem}[section]
\newtheorem{corollary}[theorem]{Corollary}
\newtheorem{lemma}[theorem]{Lemma}
\newtheorem{proposition}[theorem]{Proposition}
\theoremstyle{definition}
\newtheorem{definition}[theorem]{Definition}
\theoremstyle{remark}
\numberwithin{equation}{section}
\def\R {{\mathbb{R}}}
\def\3{{|\!|\!|}}
\def\d#1#2{{\rm dec}({\bf\Sigma}^0_{#1},{\bf\Delta}^0_{#2})}
\def\jr#1#2{^{-1}{\bf\Sigma}^0_{#1}\subseteq{\bf\Sigma}^0_{#2}}
\begin{document}

\title{Decomposing functions of Baire class $2$ on Polish spaces}
\author{Longyun Ding, Takayuki Kihara, Brian Semmes, and Jiafei Zhao}
\address{(Longyun Ding and Jiafei Zhao) School of Mathematical Sciences and LPMC, Nankai University, Tianjin, 300071, P.R.China}
\email{dinglongyun@gmail.com (Longyun Ding), 294465868@qq.com (Jiafei Zhao)}
\address{(Takayuki Kihara) Graduate School of Informatics, Nagoya University, Nagoya, 464-8601, Japan}
\email{kihara@i.nagoya-u.ac.jp}
\address{(Brian Semmes) StudyLab Language School, Moscow, Nikolskaya St. 10, Nikolskaya Plaza Office Centre, Russian Federation}
\email{brian@studylab.ru}
\thanks{The Research of the first author was partially supported by the National Natural Science Foundation of China (Grant No. 11725103). The second author was partially supported by JSPS KAKENHI Grant
17H06738 and 15H03634, and also thanks JSPS Core-to-Core Program (A. Advanced Research Networks) for supporting the research.}

\subjclass[2010]{Primary 03E15, 54H05, 26A21}


\begin{abstract}
We prove the Decomposability Conjecture for functions of Baire class $2$ on a Polish space to a separable metrizable space. This partially answers an important open problem in descriptive set theory.
\end{abstract}
\maketitle

\section{Introduction}

In descriptive set theory, the study of decomposability of Borel functions originated by a famous question asked by Luzin around a century ago: Is every Borel function decomposable into countably many continuous functions? This question was answered negatively. Many counterexamples appeared in the literature (cf. \cite{keldys,kuratowski}) show that, even a function of Baire class $1$ is not necessarily decomposable. Among these counterexamples, the Pawlikowski function $P:(\omega+1)^\omega\to\omega^\omega$ stands in an important position. Indeed, Solecki \cite{solecki1} proved that:
\begin{quote}
Let $X,Y$ be separable metrizable spaces with $X$ analytic, and let $f:X\to Y$ be of Baire class $1$. Then $f$ is not decomposable into countably many continuous functions iff $P\sqsubseteq f$, i.e., there exists embeddings $\phi:(\omega+1)^\omega\to X$ and $\psi:\omega^\omega\to Y$ such that $\psi\circ P=f\circ\phi$.
\end{quote}
Later, Pawlikowski and Sabok~\cite{PS} generalized this theorem onto all Borel functions from an analytic space to a separable metrizable space. Motto Ros~\cite[Lemma 5.6]{motto_ros} also gave an elegant proof for all functions of Baire class $n$ with $n<\omega$.

A natural generalization of Luzin's question is to replace continuous functions with ${\bf\Sigma}^0_\gamma$-measurable functions. We write $f\in{\rm dec}({\bf\Sigma}^0_\gamma)$ if there exists a partition $(X_k)$ of $X$ with each $f\upharpoonright X_k$ is ${\bf\Sigma}^0_\gamma$-measurable; and also write $f\in\d{\gamma}{\delta}$ if such a partition can be a sequence of ${\bf\Delta}^0_\delta$ subsets of $X$. It is trivial to see that, for $\delta\ge\gamma$, $f\in\d\gamma\delta$ implies the ${\bf\Sigma}^0_\delta$-measurability of $f$. It is also well known that, for any ${\bf\Sigma}^0_\delta$-measurable function $f$ with $\delta>\gamma$, we have $f\in{\rm dec}({\bf\Sigma}^0_\gamma)\iff f\in\d{\gamma}{\delta+1}$ (cf. \cite[Proposition 4.5]{motto_ros}).

A slightly more finer notion of Baire hierarchy was essentially introduced by Jayne \cite{jayne} for studying the Banach space of functions of Baire class $\alpha$. A function $f:X\to Y$ is called a ${\bf\Sigma}_{\alpha,\beta}$ function (or more precisely denoted by $f\jr\beta\alpha$) if the preimage $f^{-1}(A)$ is ${\bf\Sigma}^0_\alpha$ in $X$ for every ${\bf\Sigma}^0_\beta$ subset $A$ of $Y$. The following theorem discovers a deep connection between this notion and decomposability:
\begin{theorem}[Jayne-Rogers \cite{JR}]\label{thm:JR-theorem}
Let $X,Y$ be separable metrizable spaces with $X$ analytic, and let $f:X\to Y$. Then
$$f\jr22\iff f\in\d12.$$
\end{theorem}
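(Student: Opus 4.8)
The plan is to establish the two implications separately, the direction $f\in\d12\Rightarrow f\jr22$ being routine and the converse carrying all the weight. For the easy direction, write $X=\bigsqcup_k X_k$ with each $X_k\in{\bf\Delta}^0_2(X)$ and $f\upharpoonright X_k$ continuous. Given $A\in{\bf\Sigma}^0_2(Y)$, one has $f^{-1}(A)=\bigcup_k(f\upharpoonright X_k)^{-1}(A)$, and each term is relatively ${\bf\Sigma}^0_2$ in $X_k$ by continuity, hence of the form $\bigcup_n(C_n\cap X_k)$ with $C_n$ closed in $X$. Since $X_k\in{\bf\Delta}^0_2(X)$ and ${\bf\Delta}^0_2$ is closed under intersection with closed sets, each such term lies in ${\bf\Sigma}^0_2(X)$, and the countable union over $k$ is again ${\bf\Sigma}^0_2(X)$, giving $f\jr22$.

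All the content lies in the converse. First I would reformulate $f\jr22$ as $f^{-1}(C)\in{\bf\Sigma}^0_2$ for every closed $C\subseteq Y$, reduce to $X$ Polish (treating general analytic $X$ afterwards), and refine the topology of $Y$ to a zero-dimensional separable metrizable one with a countable clopen basis $\{V_m\}$; the hypothesis is preserved and yields that each $A_m:=f^{-1}(V_m)$ is both $G_\delta$ and $F_\sigma$, i.e. $A_m\in{\bf\Delta}^0_2$. The engine is a Cantor--Bendixson derivative on closed sets: for closed $F\subseteq X$ put
$$F'=\{x\in F:\ f\upharpoonright(U\cap F)\text{ is discontinuous for every open }U\ni x\},$$
so that $F'$ is relatively closed in $F$, while $F\setminus F'$, the set of local-continuity points, is relatively open and (by second countability) a countable union of sets $U_k\cap F$ on which $f$ is continuous, each of which is open$\,\cap\,$closed, hence in ${\bf\Delta}^0_2(X)$.

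Iterating $F_0=X$, $F_{\xi+1}=F_\xi'$, $F_\lambda=\bigcap_{\xi<\lambda}F_\xi$ gives a decreasing transfinite sequence of closed sets that stabilizes at some countable ordinal $\theta$ with $F_\theta'=F_\theta$. The crux, and the step I expect to be the main obstacle, is the following localization lemma, which forces $F_\theta=\emptyset$: \emph{if $Z$ is a nonempty Polish space and $g:Z\to Y$ satisfies $g^{-1}(\text{closed})\in{\bf\Sigma}^0_2$, then $g$ is continuous on some nonempty open subset of $Z$} (apply it to $Z=F_\theta$, $g=f\upharpoonright F_\theta$, which inherits the hypothesis since $F_\theta$ is closed). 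A first, easier step toward it is comeager continuity: applying the Baire category theorem to the two $F_\sigma$ decompositions $A_m=\bigcup_k C_{m,k}$ and $Z\setminus A_m=\bigcup_k D_{m,k}$ inside every open set shows that $\mathrm{int}(A_m)\cup\mathrm{int}(Z\setminus A_m)$ is dense open, whence $g$ is continuous on the dense $G_\delta$ set $\bigcap_m(\mathrm{int}(A_m)\cup\mathrm{int}(Z\setminus A_m))$. Upgrading comeager continuity to continuity on a genuinely \emph{open} set is where the full strength of the hypothesis must enter, namely that preimages of closed sets, i.e. the countable intersections $\bigcap_n A_{m_n}$ of the basic ${\bf\Delta}^0_2$ pieces, remain $F_\sigma$ rather than merely $G_\delta$. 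I would exploit this through a fusion/tree argument: assuming the discontinuity set of $g$ is dense, build a Cantor scheme selecting points of controlled oscillation so as to manufacture a closed $C\subseteq Y$ whose preimage is $G_\delta$ but not $F_\sigma$, contradicting the hypothesis.

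Granting the lemma, $F_\theta=\emptyset$, so $X=\bigsqcup_{\xi<\theta}(F_\xi\setminus F_{\xi+1})$ with $\theta<\omega_1$; unpacking each difference into its countably many pieces $U_k\cap F_\xi$ exhibits $X$ as a countable union of ${\bf\Delta}^0_2$ sets on each of which $f$ is continuous. Disjointifying (boolean combinations of ${\bf\Delta}^0_2$ sets are ${\bf\Delta}^0_2$, and continuity is inherited by subsets) converts this cover into a partition, yielding $f\in\d12$. Finally, for general analytic $X$ I would transfer from the Polish case by a standard argument, using that analytic sets enjoy the Baire property in every compatible Polish topology so that the derivative and the category computations above go through relative to $X$.
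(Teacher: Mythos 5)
This theorem is quoted from Jayne--Rogers \cite{JR} and not proved in the paper, so the comparison is with the known proofs (e.g.\ \cite{MS,KMS}) and with the paper's own level-$3$ analogues. Your architecture is the standard and correct one: the easy direction is fine, and the derivative $F\mapsto F'$ on closed sets, its transfinite iteration, the stabilization at some $\theta<\omega_1$, and the final disjointification into ${\bf\Delta}^0_2$ pieces are all sound (the detour through a zero-dimensional refinement of $Y$ is unnecessary and slightly risky for preserving the hypothesis; it suffices to note directly that $f^{-1}(W)\in{\bf\Delta}^0_2$ for every basic open $W\subseteq Y$, since $W$ and $W^c$ are both ${\bf\Sigma}^0_2$).

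The genuine gap is the localization lemma, which you correctly identify as the crux but then only gesture at. Everything up to and including ``$g$ is continuous on a comeager $G_\delta$'' is true of \emph{every} function with ${\bf\Delta}^0_2$ preimages of basic open sets, hence of every Baire class $1$ function, and Baire class $1$ functions need not lie in $\d12$; so that step cannot carry any weight. The entire content of the Jayne--Rogers theorem is concentrated in the upgrade: from ``$g\upharpoonright U$ is discontinuous for every nonempty open $U\subseteq F_\theta$'' one must actually produce a single closed $C\subseteq Y$ with $g^{-1}(C)\notin{\bf\Sigma}^0_2$. This requires pinning down, by a pigeonhole over the countable basis of $Y$, a pair of ``separated'' targets $V$, $\overline{V}^c$ whose preimages are simultaneously dense in a common closed set while the restriction to $f^{-1}(\overline{V}^c)$ remains locally bad, and then fusing such data along a Cantor scheme so that the resulting preimage is dense and co-dense in a perfect set in a way that defeats every $F_\sigma$ representation. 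None of this is routine: it is exactly what Lemma~\ref{PDtoVAF} together with the construction in Theorem~\ref{3to2} accomplishes one level up, at the cost of several pages of bookkeeping ($\Gamma_V(F)$, $\Theta_V(F)$, the sets $H^\alpha_{V,G,k}$, the sharpness induction). A one-sentence appeal to ``points of controlled oscillation'' does not constitute a proof of this step, and the same remark applies to the closing claim that the analytic case follows from the Polish case ``by a standard argument'': the extension beyond Polish $X$ is itself the subject of \cite{KMS} and needs the closure-in-the-ambient-space device (as in the hypothesis $F\subseteq\overline{D}$ of Lemma~\ref{PDtoVAF}), not merely the Baire property of analytic sets.
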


This theorem was generalized in \cite{KMS} to the case that $X$ is an absolute Souslin-$\mathcal F$ set and $Y$ is an arbitrary regular topological space.

It is conjectured that the Jayne-Rogers Theorem can be extended to all finite Borel ranks as follows:

\medskip

\noindent
{\bf The Decomposability Conjecture} (cf. \cite{andretta,motto_ros,PS}). Let $X,Y$ be separable metrizable spaces with $X$ analytic, and let $f:X\to Y$. Then for $n\ge 2$ we have
$$f\jr{n}n\iff f\in\d1n.$$
Furthermore, for $2\le m\le n$ we have
$$f\jr{m}n\iff f\in\d{n-m+1}n.$$

This conjecture was further generalized to The Full Decomposability Conjecture (see \cite[Section 4]{GKN}) which covers all infinite Borel ranks.
Motto Ros presented an equivalent condition of the decomposability conjecture (see \cite[Conjecture 6.1]{motto_ros}). Another interesting equivalent condition with some extra restrictions on spaces and on relation between $m,n$, concerning computability on Borel codes from $A$ to $f^{-1}(A)$, was given by Kihara in \cite{kihara}. Most recently, Gregoriades-Kihara-Ng \cite{GKN} proved
$$f\jr{m}n\ \Longrightarrow\ f\in\d{n-m+1}{n+1}.$$

It is clear that the case $m=n=2$ in the decomposability conjecture is just the Jayne-Rogers Theorem. Remarkable progress is due to Semmes, the third author of this article. In his Ph.D. thesis \cite{semmes}, Semmes proved the case $m\le n=3$ for functions $f:\omega^\omega\to\omega^\omega$. In his proof, many kinds of games for characterizing Borel functions were widely applied.
From the viewpoint of Jayne's work \cite{jayne} in functional analysis, the zero-dimensionality constraint on Semmes' theorem was strongly desired to be removed. In this article, we generalize Semmes' theorem to arbitrary Polish spaces:

\begin{theorem}\label{main}
The decomposability conjecture is true for the case that $X$ is Polish space and $m\le n=3$.
\end{theorem}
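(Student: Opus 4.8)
The plan is to prove the two directions separately, the content lying entirely in the forward implication $f\jr m3\Rightarrow f\in\d{4-m}3$. The reverse implication holds on any separable metrizable $X$ by a direct rank computation: if $X=\bigsqcup_k X_k$ with each $X_k\in{\bf\Delta}^0_3$ and each $f\upharpoonright X_k$ being ${\bf\Sigma}^0_{4-m}$-measurable, then for $A\in{\bf\Sigma}^0_m(Y)$ one has $(f\upharpoonright X_k)^{-1}(A)\in{\bf\Sigma}^0_{(4-m)+(m-1)}(X_k)={\bf\Sigma}^0_3(X_k)\subseteq{\bf\Sigma}^0_3(X)$, and a countable union of such sets remains in ${\bf\Sigma}^0_3(X)$. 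Since every hypothesis and conclusion refers only to the Borel structure of $Y$, I would first embed $Y$ into the Hilbert cube and thereby assume $Y$ is Polish, using that the ${\bf\Sigma}^0_\beta(Y)$ sets are precisely the traces on $Y$ of the ${\bf\Sigma}^0_\beta$ sets of the cube. Both values $m=2$ and $m=3$ (targeting $\d23$ and $\d13$ respectively) are treated in parallel.

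For the forward direction the obvious strategy---pass to a zero-dimensional Polish space $Z$ with a continuous surjection $p\colon Z\to X$, apply Semmes' theorem to $g=f\circ p$, and transfer the resulting decomposition back to $X$---does \emph{not} yield the sharp bound. Any such transfer must push ${\bf\Delta}^0_3(Z)$ pieces forward along a continuous bijection, or pull them back along a measurable section of $p$, through a map whose relevant inverse is at best of Baire class $1$ (a continuous section would force $X$ itself to be zero-dimensional). A short rank count then shows that the pieces are inflated to ${\bf\Delta}^0_4(X)$, reproducing exactly the ${\bf\Delta}^0_{n+1}$ bound of Gregoriades--Kihara--Ng rather than the sharp ${\bf\Delta}^0_3$. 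This one-level loss is intrinsic to \emph{any} reduction to the zero-dimensional case, so the argument must instead be carried out directly on the Polish space $X$.

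Accordingly, I would adapt Semmes' game-theoretic characterizations of the classes $\d23$ and $\d13$ directly to a Polish domain. Fix a complete compatible metric and a countable basis $(U_i)$ of $X$; in the generalized game the ``input'' player plays a decreasing sequence of basic open sets of vanishing diameter, whose intersection is a single point $x\in X$ by completeness, while the ``output'' player builds a point of $Y$ subject to the move constraints Semmes used to encode the class, and one re-proves via Borel determinacy that $f$ lies in the class iff the output player has a winning strategy, reading the decomposition off such a strategy. The main obstacle is that, unlike $\omega^\omega$, a Polish space has no clopen basis, so the basic open sets used as moves have nontrivial boundaries and the sets extracted from a strategy are \emph{a priori} only ${\bf\Sigma}^0_4$; this boundary effect is precisely what manufactures the extra Borel level in the naive reduction. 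To suppress it I would work with a basis whose members have nowhere dense (hence ${\bf\Pi}^0_1$) boundaries, which exists in any metrizable space, and prove a rank-stabilization lemma guaranteeing that the strategy-derived sets are genuinely ${\bf\Delta}^0_3(X)$ and that the induced restrictions are exactly ${\bf\Sigma}^0_{4-m}$-measurable. That lemma, which replaces the trivial clopen bookkeeping available in the zero-dimensional setting, is where I expect the real difficulty to lie; granting it, the theorem follows by assembling the pieces for $m=2$ and $m=3$ and verifying that they partition $X$.
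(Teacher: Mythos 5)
There is a genuine gap, and it sits exactly where you place it yourself: the ``rank-stabilization lemma'' that you defer. Your treatment of the easy direction is fine, and your diagnosis of why a reduction to a zero-dimensional cover loses one Borel level (landing at the Gregoriades--Kihara--Ng bound $\d{4-m}4$ instead of $\d{4-m}3$) is correct and matches the paper's motivation. But for the forward direction you propose to transplant Semmes' games to a Polish domain and then assert that a suitable lemma will show the strategy-derived sets are ${\bf\Delta}^0_3(X)$ rather than ${\bf\Sigma}^0_4$. That lemma \emph{is} the theorem: extracting a ${\bf\Delta}^0_3$ partition from a winning strategy is already the bulk of Semmes' thesis in the clopen setting, and no mechanism is offered here for doing it without clopen moves. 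The specific device you name --- choosing a basis whose members have nowhere dense boundaries --- is vacuous: the boundary of \emph{any} open set in \emph{any} topological space is closed and nowhere dense, so this property cannot be what recovers the lost Borel level. In addition, the appeal to Borel determinacy to re-prove the game characterization over a Polish space is itself a substantial unproved claim, since Semmes' characterizations use unfolding/covering arguments tied to $\omega^\omega$.

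It is also worth knowing that the paper deliberately abandons the game-theoretic route (the authors state explicitly that ``the use of games for Borel functions is misleading''). Their proof runs by contraposition: assuming $f\notin\d{4-m}3$, they build a continuous embedding $\psi:2^\omega\to X$ and a set $A\in{\bf\Sigma}^0_m(Y)$ with $\psi^{-1}(f^{-1}(A))$ equal to the complement of the standard ${\bf\Sigma}^0_3$-complete set $\Omega$, so that $f^{-1}(A)\notin{\bf\Sigma}^0_3$. The construction is a finite-injury-style induction along the diagonals of $2^{<\omega}$ (the ``inheritor/innovator'' bookkeeping), driven by two combinatorial tools: a lemma (Lemma~\ref{PDtoVAF}, resp.~\ref{PDtoVFE}) producing, from the failure of decomposability, a basic open $V$ and closed sets on which the restriction of $f$ off $\overline V$ still fails to decompose; and the notion of a $P$-sharp sequence of closed sets together with the counting Lemma~\ref{sharp} (resp.~\ref{shp}). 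The $m=3$ case is then reduced to the $m=2$ case plus a Baire-class-$1$ statement (Theorem~\ref{3to1part}). None of this appears in your outline, so as it stands the proposal does not constitute a proof.
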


It is worth noting that in our proof, no game for Borel functions are involved. This is the key point that this proof can be extended to all Ploish spaces. This theorem consists of two cases: (a) $m=2,n=3$, and (b) $m=n=3$. We will prove them in sections 3 and 4 respectively.

Following the outline of Semmes' proof, the proof appearing in this article was developed by the first and the forth authors. Almost at the same time, the second author independently gave a detailed exposition of Semmes' strategy. He also asserted that the use of games for Borel functions is misleading, and emphasized the use of finite injury priority argument instead. Soon after reading it, Motto Ros pointed out that the same argument in the second author's proof also works well, with some minor modifications, for arbitrary Polish spaces.

The authors would like to thank Rapha\"el Carroy and Luca Motto Ros for helpful suggestions. The first author is grateful to Slawomir Solecki for his suggestions and encouragement.
The second author also would like to thank Vassilios Gregoriades, Tadatoshi Miyamoto, and Yasuo Yoshinobu for valuable discussions.

\section{Preliminaries}

{\sl All topological spaces considered in this article are separable metrizable.} For any subset $A$ of a topological space $X$, we denote by $\overline{A}$ the closure of $A$ in $X$ and denote $A^c=X\setminus A$ for brevity.

We recall some basic notations. A topological space is called a {\it Polish space} if it is separable and completely metrizable, and is called an {\it analytic space} if it is homeomorphic to an analytic subset of a Polish space. Given a separable metrizable space $X$, Borel sets of $X$ can be analyzed into Borel hierarchy, consisting of ${\bf\Sigma}^0_\xi$, ${\bf\Pi}^0_\xi$ subsets for $1\le\xi<\omega_1$. As usual, we denote ${\bf\Delta}^0_\xi={\bf\Sigma}^0_\xi\cap{\bf\Pi}^0_\xi$.

Let $X,Y$ be two separable metrizable spaces, and $f:X\to Y$. We say $f$ is ${\bf\Sigma}^0_\alpha$-measurable if $f^{-1}(U)\in{\bf\Sigma}^0_\alpha$ for every open set $U\subseteq Y$. For the definition of the Baire classes of functions, one can see \cite[(24.1)]{kechris}. It is well known that a function is of Baire class $\xi$ iff it is ${\bf\Sigma}^0_{\xi+1}$-measurable (cf. \cite[(24.3)]{kechris}).

In the section of introduction, we already presented notion of ${\bf\Sigma}_{\alpha,\beta}$ functions, $f\jr\beta\alpha$ and $\d\gamma\delta$. The following proposition give some well known properties which will be used again and again in the rest of this article.

\begin{proposition}[folklore]\label{proposition}
Let $X,Y$ be two separable metrizable spaces, and let $f:X\to Y$. Then the following are equivalent:
\begin{enumerate}
\item[(i)] $f\in\d\gamma\delta$.
\item[(ii)] There exists a sequence $(A_n)$ of ${\bf\Sigma}^0_\delta$ subsets with $X=\bigcup_nA_n$ such that every $f\upharpoonright A_n$ is ${\bf\Sigma}^0_\gamma$-measurable.
\item[(iii)] There exists a sequence $(A_n)$ of ${\bf\Sigma}^0_\delta$ subsets with $X=\bigcup_nA_n$ such that every $f\upharpoonright A_n\in\d\gamma\delta$.
\end{enumerate}
\end{proposition}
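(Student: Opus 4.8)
The plan is to prove the cycle of implications (i)$\Rightarrow$(ii)$\Rightarrow$(iii)$\Rightarrow$(i), with essentially all of the content residing in the last step. The implication (i)$\Rightarrow$(ii) is immediate: a partition witnessing $f\in\d\gamma\delta$ is in particular a countable cover, every ${\bf\Delta}^0_\delta$ set is ${\bf\Sigma}^0_\delta$, and the restrictions are ${\bf\Sigma}^0_\gamma$-measurable by hypothesis. For (ii)$\Rightarrow$(iii), I would observe that each subspace $A_n$ is trivially ${\bf\Delta}^0_\delta$ in itself (as ${\bf\Delta}^0_1\subseteq{\bf\Delta}^0_\delta$), so the one-block partition $\{A_n\}$ of $A_n$, together with the assumed ${\bf\Sigma}^0_\gamma$-measurability of $f\upharpoonright A_n$, already witnesses $f\upharpoonright A_n\in\d\gamma\delta$.

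The core is (iii)$\Rightarrow$(i). First I would disjointify the cover $(A_n)$ using the reduction property for the class ${\bf\Sigma}^0_\delta$: there exist pairwise disjoint ${\bf\Sigma}^0_\delta$ sets $A_n^*\subseteq A_n$ with $\bigcup_n A_n^*=\bigcup_n A_n=X$, so that $(A_n^*)$ is in fact a partition of $X$. The point of disjointness is that it upgrades the pieces to ${\bf\Delta}^0_\delta$: since $X\setminus A_n^*=\bigcup_{m\neq n}A_m^*$ is a countable union of ${\bf\Sigma}^0_\delta$ sets, each $A_n^*$ is simultaneously ${\bf\Sigma}^0_\delta$ and ${\bf\Pi}^0_\delta$, hence ${\bf\Delta}^0_\delta$ in $X$. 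Next, for each $n$ I would use the hypothesis $f\upharpoonright A_n\in\d\gamma\delta$ to fix a partition $(C_{n,k})_k$ of $A_n$ into sets that are ${\bf\Delta}^0_\delta$ relative to the subspace $A_n$, with each $f\upharpoonright C_{n,k}$ being ${\bf\Sigma}^0_\gamma$-measurable, and set $D_{n,k}=A_n^*\cap C_{n,k}$.

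It then remains to verify that $(D_{n,k})_{n,k}$ is a partition of $X$ into ${\bf\Delta}^0_\delta$ subsets of $X$ with ${\bf\Sigma}^0_\gamma$-measurable restrictions; this bookkeeping is the main thing to get right. That the $D_{n,k}$ partition $X$ follows since $A_n^*\subseteq A_n$, the $(C_{n,k})_k$ partition $A_n$, and the $A_n^*$ partition $X$. Writing $C_{n,k}=A_n\cap S=A_n\cap P$ with $S\in{\bf\Sigma}^0_\delta$ and $P\in{\bf\Pi}^0_\delta$ in $X$, and using $A_n^*\subseteq A_n$, I obtain $D_{n,k}=A_n^*\cap S=A_n^*\cap P$; since $A_n^*\in{\bf\Delta}^0_\delta$ and the classes ${\bf\Sigma}^0_\delta$, ${\bf\Pi}^0_\delta$ are closed under finite intersections, $D_{n,k}\in{\bf\Delta}^0_\delta$ in $X$. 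Finally $f\upharpoonright D_{n,k}$ is a restriction of the ${\bf\Sigma}^0_\gamma$-measurable map $f\upharpoonright C_{n,k}$, hence itself ${\bf\Sigma}^0_\gamma$-measurable, and so $f\in\d\gamma\delta$.

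The only genuine subtlety, and the step I expect to be the main obstacle, is the passage from a mere cover to a ${\bf\Delta}^0_\delta$ partition, which rests on the reduction property of ${\bf\Sigma}^0_\delta$. I would take care to restrict to $\delta\geq2$, the range relevant to this paper, since reduction for ${\bf\Sigma}^0_1$ can fail in spaces that are not zero-dimensional such as $\R$; once this tool is granted, the remaining arguments are purely a matter of tracking relative versus ambient Borel classes.
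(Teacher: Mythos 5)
Your proof is correct, but the implication (iii)$\Rightarrow$(i) is organized differently from the paper's. You disjointify the cover $(A_n)$ \emph{first}, invoking the generalized reduction property of ${\bf\Sigma}^0_\delta$ to get pairwise disjoint $A_n^*\subseteq A_n$, observe that disjointness upgrades each $A_n^*$ to ${\bf\Delta}^0_\delta$, and then intersect with the relative partitions. The paper goes in the opposite order: it writes each $A_n$ as a countable union of ambient ${\bf\Delta}^0_\delta$ sets $B_n^m$, writes each relative partition piece of $A_n$ as $A_n\cap C_n^k=A_n\setminus D_n^k$ with $C_n^k,D_n^k\in{\bf\Sigma}^0_\delta(X)$ so that $B_n^m\cap C_n^k=B_n^m\setminus D_n^k$ is ambient ${\bf\Delta}^0_\delta$ (exactly your two-sided trace argument for $D_{n,k}$), and only disjointifies at the very end by setting $K_l'=K_l\setminus\bigcup_{i<l}K_i$, which stays in ${\bf\Delta}^0_\delta$ because that class is an algebra. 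The two arguments rest on the same underlying fact --- for $\delta\ge 2$ every ${\bf\Sigma}^0_\delta$ set in a metrizable space is a countable union of ${\bf\Delta}^0_\delta$ sets --- but in your version it is hidden inside the proof of the reduction theorem, whereas the paper uses it directly and so avoids citing reduction as a black box; your version is slightly shorter, the paper's is more self-contained. Your explicit caveat that $\delta\ge 2$ is needed is a genuine point: the paper's proof has the same implicit restriction (an open subset of a connected space need not be a countable union of clopen sets), which is harmless since the proposition is only applied with $\delta\in\{2,3\}$.
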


\begin{proof}
(i)$\Rightarrow$(ii) and (ii)$\Rightarrow$(iii) are trivial. We only prove (iii)$\Rightarrow$(i).

For every $n<\omega$, since $A_n\in{\bf\Sigma}^0_\delta$, we can choose a sequence $(B_n^m)_{m<\omega}$ of ${\bf\Delta}^0_\delta$ sets such that $A_n=\bigcup_mB_n^m$. Moreover, since
$$f\upharpoonright A_n\in\d\gamma\delta,$$
there exist two sequences $(C_n^k)_{k<\omega},(D_n^k)_{k<\omega}$ of ${\bf\Sigma}^0_\delta$ sets with
$$A_n\subseteq\bigcup_kC_n^k,\quad A_n\cap C_n^k=A_n\setminus D_n^k$$
such that each $f\upharpoonright(C_n^k\cap A_n)$ is ${\bf\Sigma}^0_\gamma$-measurable. Note that $B_n^m\cap C_n^k=B_n^m\setminus D_n^k\in{\bf\Delta}^0_\delta$, and $f\upharpoonright(B_n^m\cap C_n^k)$ is ${\bf\Sigma}^0_\gamma$-measurable for all $n,k,m<\omega$.

Let $(K_l)_{l<\omega}$ be an enumeration of all $B_n^m\cap C_n^k$,\, $n,k,m<\omega$. Then
$$\bigcup_lK_l=\bigcup_{n,k,m}(B_n^m\cap C_n^k)=X.$$
For each $l<\omega$, put $K'_l=K_l\setminus(\bigcup_{i<l}K_i)$. Then the sequence $(K_l')_{l<\omega}$ of ${\bf\Delta}^0_\delta$ subsets witnesses that $f\in\d\gamma\delta$.
\end{proof}

\section{The decomposability conjecture for $m=2,n=3$}

We prove Theorem~\ref{main} for $m=2,n=3$ in this section, and for $m=n=3$ in the next section. The following lemma is the key tool for proving the main theorem of this section, just like the role of Lemma 4.3.3 in \cite{semmes}.

\begin{lemma}\label{PDtoVAF}
Let $X,P$ be two separable metrizable spaces, and let $D\subseteq X$, $h:D\to P$ a function of Baire class $2$. Let $\mathcal B_P$ be a countable topological basis of $P$, and for each $V\in\mathcal B_P$, let $\mathcal G_V$ be a countable class of subsets of $D$ such that
$$h^{-1}(V)=\bigcup\mathcal G_V.$$
If $h\notin\d23$, then there exist $V\in\mathcal B_P, G\in\mathcal G_V$, and a closed set $F\subseteq\overline D$ satisfying:
\begin{enumerate}
\item[(a)] For any open set $U$ with $F\cap U\ne\emptyset$,
$$h\upharpoonright(h^{-1}(\overline{V}^c)\cap F\cap U)\notin\d23;$$
\item[(b)] $G\cap F$ is dense in $F$;
\item[(c)] $F\cap D\ne\emptyset$.
\end{enumerate}
\end{lemma}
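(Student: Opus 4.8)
The plan is to obtain $F$ by two successive ``derivatives'' that strip away the decomposable behaviour, reading off $V$ and $G$ from a category argument along the way. First I would produce a closed set on which non-decomposability is hereditary. Let
$$W=\bigcup\{U\subseteq X\text{ open}:h\upharpoonright(D\cap U)\in\d23\}.$$
Since $X$ is second countable it is Lindel\"of, so $W$ is already a countable union of such $U$; as every open and every closed subset is ${\bf\Sigma}^0_3$, Proposition~\ref{proposition} (iii)$\Rightarrow$(i) gives $h\upharpoonright(D\cap W)\in\d23$. Because $h\notin\d23$ we have $D\not\subseteq W$, so $F_1:=\overline D\setminus W$ is a nonempty closed set meeting $D$. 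The same combination argument shows that for every open $U$ meeting $F_1$ one has $h\upharpoonright(D\cap F_1\cap U)\notin\d23$: otherwise, writing $D\cap U=(D\cap F_1\cap U)\cup(D\cap W\cap U)$ as a union of two ${\bf\Sigma}^0_3$ pieces both lying in $\d23$, Proposition~\ref{proposition} would force $U\subseteq W$. Thus $F_1$ is hereditarily non-decomposable.

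Next I would split off the values. For a basic $V$ the preimage $h^{-1}(\overline V^{\,c})$ is ${\bf\Sigma}^0_3$ (as $h$ is ${\bf\Sigma}^0_3$-measurable and $\overline V^{\,c}$ is open), so the restriction $h\upharpoonright h^{-1}(\overline V^{\,c})$ is again amenable to the derivative above. Running that derivative for each $V$ on $h\upharpoonright h^{-1}(\overline V^{\,c})$ inside $F_1$ produces, for each $V$, a closed set on which condition (a) holds hereditarily; crucially every piece appearing here is ${\bf\Sigma}^0_3$, so no ${\bf\Pi}^0_3$ obstruction arises from the closed set $\overline V$ and Proposition~\ref{proposition} applies throughout. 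To select a single $V$ compatible with (b), I would use that $\bigcup_{V\in\mathcal B_P}\overline V^{\,c}=P$ (metrizability of $P$), whence $D\cap F_1\cap U=\bigcup_V(h^{-1}(\overline V^{\,c})\cap D\cap F_1\cap U)$ is a countable union of ${\bf\Sigma}^0_3$ pieces and, by Proposition~\ref{proposition}, the non-decomposability concentrates on some $h^{-1}(\overline V^{\,c})$-part. Played against the dual cover $D\cap F_1\cap U=\bigcup_V(h^{-1}(V)\cap F_1\cap U)$, this is what forces a $V$ whose outside-part $h^{-1}(\overline V^{\,c})$ carries the non-decomposability while whose inside-part $h^{-1}(V)$ stays topologically large on the relevant piece of $F_1$.

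Finally I would extract $G$ and pin down $F$. Having fixed $V$ and a piece of $F_1$ on which $h^{-1}(V)$ is non-meager, I would invoke the Baire category theorem on the Polish space $\overline D$ (this is where completeness of $X$ enters) applied to the countable cover $h^{-1}(V)\cap F=\bigcup_{G\in\mathcal G_V}(G\cap F)$: some $G\cap F$ is non-meager, hence dense in a nonempty relatively open part of $F$, to which I restrict $F$ to secure (b). Since $G\subseteq D$, this restriction also yields $F\cap D\ne\emptyset$, i.e.\ (c).

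The main obstacle is the tension between (a) and (b): non-decomposability is not inherited by subsets, so every time I shrink $F$ to make some $G$ dense I risk destroying the hereditary badness of the $\overline V^{\,c}$-part demanded by (a), while (a) itself wants $F$ ``fat'' at every scale. Reconciling the two is really a single simultaneous construction rather than two independent steps: $F$ must be hereditarily non-decomposable on its outside-part \emph{in its own relative topology} and at the same time be the closure of (a dense subset of) a single $G\in\mathcal G_V$. The choice of $V$ is exactly what makes this possible, namely a neighbourhood of a ``condensation value'' of the non-decomposability, so that the inside-part remains dense (feeding (b)) while the outside-part remains hereditarily bad (feeding (a)). I expect the bulk of the work to be this coordinated derivative, with Step~1 and the ${\bf\Sigma}^0_3$-bookkeeping being the routine parts.
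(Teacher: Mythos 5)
Your Step 1 (the first derivative $W$, $F_1=\overline D\setminus W$) is correct and routine, but the lemma's entire difficulty lies in what you defer. You need a \emph{single} pair $(V,G)$ such that (a) holds for \emph{every} open $U$ meeting $F$ while $G\cap F$ is dense in $F$. Your selection argument only shows that for each fixed $U$ the non-decomposability of $h\upharpoonright(D\cap F_1\cap U)$ concentrates on \emph{some} $h^{-1}(\overline V^{\,c})$-part; the witness $V$ may vary with $U$, and nothing you write promotes this to a uniform choice. You candidly flag this tension (``a single simultaneous construction rather than two independent steps'') and then say you ``expect the bulk of the work to be this coordinated derivative'' --- but that coordinated derivative \emph{is} the proof, and it is absent. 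The paper carries it out by iterating, for each pair $(V,G)$, the operation $F^{\alpha+1}_{V,G}=\overline{G\cap\Theta_V(F^\alpha_{V,G})}$ (which builds density of $G$ into the derivative itself, so (a) and (b) hold automatically at stabilization), and then --- this is the real content --- shows that if \emph{all} these derivatives stabilize at $\emptyset$, then $h\in\d23$: the set $H$ of points lying in two ``good'' pieces with values in basic sets $V_1,V_2$ having disjoint closures is an $F_\sigma$ on which $h$ decomposes by Proposition~\ref{proposition}, while on the $G_\delta$ complement $D\setminus H$ the pieces of diameter $\le 1/n$ yield Baire class $1$ functions $g_n$ converging uniformly to $h$, so $h\upharpoonright(D\setminus H)$ is of Baire class $1$. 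Nothing in your proposal plays the role of this contradiction argument, and without it there is no way to certify that some $(V,G)$ survives.

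A secondary problem: your final step invokes the Baire category theorem on $\overline D$, ``where completeness of $X$ enters.'' The lemma assumes only that $X$ is separable metrizable, so $\overline D$ need not be a Baire space; the paper's proof never uses category (density of $G\cap F$ in $F$ comes for free from the form of the derivative), and indeed the lemma is applied later to sets $D=f^{-1}(P)\cap F\cap U$ where one would not want to track completeness. Even granting a Baire space, after you shrink $F$ to the open piece where $G\cap F$ is dense you must re-verify (a) on the shrunken set, which reopens the same circularity you identified.
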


\begin{proof}
Let $\{U_k:k<\omega\}$ be a topological basis of $X$. For any $V\in\mathcal B_P$ and any closed subset $F\subseteq X$, we denote
$$\Gamma_V(F)=\{k<\omega:h\upharpoonright(h^{-1}(\overline{V}^c)\cap F\cap U_k)\in\d23\},$$
$$\Theta_V(F)=\{x\in F:\forall k<\omega(x\in U_k\Rightarrow k\notin\Gamma_V(F))\}.$$
It is trivial to see that $\Theta_V(F)\subseteq\overline D$ is closed.

For any $G\in\mathcal G_V$, we define closed set $F_{V,G}^\alpha$ for $\alpha<\omega_1$ as follows:
$$F_{V,G}^0=X,$$
$$F_{V,G}^{\alpha+1}=\overline{G\cap\Theta_V(F_{V,G}^\alpha)},$$
$$F_{V,G}^\lambda=\bigcap_{\alpha<\lambda}F_{V,G}^\alpha,\quad\mbox{ for limit ordinal }\lambda.$$
Since $X$ is second countable, there exists a $\xi<\omega_1$ such that $F_{V,G}^\alpha=F_{V,G}^\xi$ for each $V,G$ and $\alpha\ge\xi$.

If there exist $V\in\mathcal B_P, G\in\mathcal G_V$ such that $F_{V,G}^\xi\ne\emptyset$, then $V,G$, and $F=F_{V,G}^\xi$ fulfil clauses (a) and (b). Set $U=X$ in (a), we can see (c) is also fulfilled.

Assume for contradiction that, for any $V\in\mathcal B_P, G\in\mathcal G_V$, we have $F_{V,G}^\xi=\emptyset$. For $\alpha<\xi$ and $k\in\Gamma_V(F_{V,G}^\alpha)$, put
$$H_{V,G,k}^\alpha=F_{V,G}^\alpha\cap U_k.$$
Note that
$$h\upharpoonright(h^{-1}(\overline{V}^c)\cap H_{V,G,k}^\alpha)\in\d23.$$

Now define a subset $H$ of all $x$ satisfying that, there exist $V_1,V_2\in\mathcal B_P$ with $\overline{V_1}\cap\overline{V_2}=\emptyset$, and for $i=1,2$, there exist $G_i\in\mathcal G_{V_i}$, $\alpha_i<\xi$, and $k_i\in\Gamma_{V_i}(F_{V_i,G_i}^{\alpha_i})$ such that $x\in H_{V_1,G_1,k_1}^{\alpha_1}\cap H_{V_2,G_2,k_2}^{\alpha_2}$. Since
$$h\upharpoonright(h^{-1}(\overline{V_i}^c)\cap H_{V_1,G_1,k_1}^{\alpha_1}\cap H_{V_2,G_2,k_2}^{\alpha_2})\in\d23\quad(i=1,2),$$
and $h^{-1}(\overline{V_i}^c)$ is ${\bf\Sigma}^0_3$ in $D$ for $i=1,2$, by Proposition~\ref{proposition}, we have
$$h\upharpoonright(D\cap H_{V_1,G_1,k_1}^{\alpha_1}\cap H_{V_2,G_2,k_2}^{\alpha_2})\in\d23.$$
Therefore, $h\upharpoonright(D\cap H)\in\d23$.

For any $x\in D$, $V\in\mathcal B_P$, and $G\in\mathcal G_V$ with $x\in G$, we claim that there exist $\alpha<\xi$ and $k\in\Gamma_V(F_{V,G}^\alpha)$ such that $x\in H_{V,G,k}^\alpha$. There is unique $\alpha<\xi$ such that $x\in(F_{V,G}^\alpha\setminus F_{V,G}^{\alpha+1})$. Note that $x\notin(F\setminus\overline{G\cap F})$ for any closed set $F\subseteq X$, so $x\notin(\Theta_V(F_{V,G}^\alpha)\setminus F_{V,G}^{\alpha+1})$. From the definition of $\Theta_V(F_{V,G}^\alpha)$, we can find a $k\in\Gamma_V(F_{V,G}^\alpha)$ such that $x\in U_k$. Then we have $x\in F_{V,G}^\alpha\cap U_k=H_{V,G,k}^\alpha$.

In the end, we consider $h\upharpoonright(D\setminus H)$. First, for any $x\in(D\setminus H)$, if $x\in H_{V,G,k}^\alpha$ for some $V\in\mathcal B_P$, $G\in\mathcal G_V$, $\alpha<\xi$, and $k\in\Gamma_V(F_{V,G}^\alpha)$, we claim that $h(x)\in\overline{V}$. If not, we can find a $V'\in\mathcal B_P$ such that $h(x)\in V'$ and $\overline{V}\cap\overline{V'}=\emptyset$. Since $h^{-1}(V')=\bigcup\mathcal G_{V'}$, we can find an $G'\in\mathcal G_{V'}$ such that $x\in G'$. Hence $x\in H_{V',G',k'}^{\alpha'}$ for some $\alpha'<\xi$ and $k'\in\Gamma_{V'}(F_{V',G'}^{\alpha'})$, contradicting $x\notin H$. Secondly, let $d$ be a compatible metric on $P$. For any $n<\omega$, let
$$(V^n_m,G^n_m,k^n_m,\alpha^n_m)_{m<\omega}$$
be an enumeration of all $(V,G,k,\alpha)$ with ${\rm diam}(\overline{V})\le 1/n$, $G\in\mathcal G_V$, $\alpha<\xi$, and $k\in\Gamma_V(F_{V,G}^\alpha)$. Denote
$$H^n_m=H_{V^n_m,G^n_m,k^n_m}^{\alpha^n_m}.$$
For any $x\in D$, we can find a $V\in\mathcal B_P$ with ${\rm diam}(\overline{V})\le 1/n$ such that $h(x)\in V$ and a $G\in\mathcal G_V$ with $x\in G$. Hence $x\in H_{V,G,k}^\alpha$ for some $\alpha<\xi$ and $k\in\Gamma_{V}(F_{V,G}^\alpha)$.  It follows that $D\subseteq\bigcup_mH^n_m$. Put $K^n_m=H^n_m\setminus\bigcup_{k<m}H^n_k$ for each $m$. Then $(K^n_m)_{m<\omega}$ is a sequence of pairwise disjoint ${\bf\Delta}^0_2$ sets. Fix a $y^n_m\in\overline{V^n_m}$ for each $m$. Define $g_n(x)=y^n_m$ for all $x\in K^n_m$. Then $g_n$ is of Baire class 1. Furthermore, we have $d(g_n(x),h(x))\le 1/n$ for all $x\in(D\setminus H)$. So $(g_n\upharpoonright(D\setminus H))_{n<\omega}$ uniformly converges to $h\upharpoonright(D\setminus H)$. It follows that $h\upharpoonright(D\setminus H)$ is of Baire class 1 also (see \cite[(24.4) i)]{kechris}).

Note that $H$ is an $F_\sigma$ set from its definition. So $D\setminus H$ is $G_\delta$ in $D$, and hence $h\in\d23$. A contradiction!
\end{proof}

In the rest of this section, we fix $X$ be a Polish space, $Y$ a separable metrizable space, and $f:X\to Y$ a ${\bf\Sigma}^0_3$-measurable function.

\begin{definition}
Let $\mathcal F=\langle F_0,\cdots,F_k\rangle$ be a finite sequence of closed sets of $X$ with $F_0\supseteq\cdots\supseteq F_k$, $U$ an open subset of $X$, and let $P\subseteq Y$.
\begin{enumerate}
\item[(i)] If $k=0$, i.e., $\mathcal F=\langle F_0\rangle$, then we say $\mathcal F$ is $P$-{\bf sharp} in $U$ if $U\cap F_0\ne\emptyset$, and for any open set $U'\subseteq U$ with $U'\cap F_0\ne\emptyset$, we have
$$f\upharpoonright(f^{-1}(P)\cap F_0\cap U')\notin\d23.$$
We also say $F_0$ itself is $P$-sharp in $U$ for brevity.
\item[(ii)] If $k>0$, then we say $\mathcal F$ is $P$-{\bf sharp} in $U$ if $F_k$ is $P$-sharp in $U$, and for any open set $U'\subseteq U$ with $U'\cap F_k\ne\emptyset$, $\mathcal F\upharpoonright k$ is $P$-sharp in some open set $U''\subseteq U'$.
\end{enumerate}
\end{definition}

A similar notion named $\delta$-$\sigma$-good was presented in \cite{semmes}. The following propositions are trivial, we omit the proofs.

\begin{proposition}\label{sharp1}
Suppose $\mathcal F=\langle F_0,\cdots,F_k\rangle$ is $P$-sharp in $U$. Then for any open set $U'\subseteq U$ with $U'\cap F_k\ne\emptyset$, we have $\mathcal F$ is $P$-sharp in $U'$.
\end{proposition}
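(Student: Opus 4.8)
The plan is to proceed by induction on $k$, exploiting the fact that both defining clauses of ``$P$-sharp in $U$'' are, apart from a single nonemptiness requirement, universally quantified over open subsets of the ambient set $U$. Such conditions are automatically inherited when $U$ is replaced by a smaller open set $U'\subseteq U$, since the open subsets of $U'$ form a subfamily of the open subsets of $U$; the only clause that could fail upon shrinking is the nonemptiness of the intersection with the top closed set, and that is exactly what the hypothesis $U'\cap F_k\ne\emptyset$ guarantees.

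First I would settle the base case $k=0$. Assuming $\langle F_0\rangle$ is $P$-sharp in $U$, and given open $U'\subseteq U$ with $U'\cap F_0\ne\emptyset$, the nonemptiness clause for $U'$ is precisely the hypothesis on $U'$. For the second clause I would observe that any open $U''\subseteq U'$ meeting $F_0$ is in particular an open subset of $U$ meeting $F_0$; the $P$-sharpness of $\langle F_0\rangle$ in $U$ then yields $f\upharpoonright(f^{-1}(P)\cap F_0\cap U'')\notin\d23$ directly. Hence $\langle F_0\rangle$ is $P$-sharp in $U'$.

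For the inductive step $k>0$, given that $\mathcal F=\langle F_0,\cdots,F_k\rangle$ is $P$-sharp in $U$ and an open $U'\subseteq U$ with $U'\cap F_k\ne\emptyset$, I would verify the two clauses of the definition for $U'$ separately. The requirement that $F_k$ be $P$-sharp in $U'$ follows at once from the already-established base case applied to the single set $F_k$. For the remaining clause, any open $U''\subseteq U'$ with $U''\cap F_k\ne\emptyset$ is also an open subset of $U$ meeting $F_k$, so the $P$-sharpness of $\mathcal F$ in $U$ furnishes an open $U'''\subseteq U''$ in which $\mathcal F\upharpoonright k$ is $P$-sharp, and that very same $U'''$ serves as the witness for $U'$.

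I do not anticipate a genuine obstacle here: the content of the statement is the monotonicity just described, and no real inductive content is required. The one point worth stating carefully is that in clause (ii) the inner witness $U'''$ is only required to be contained in $U''$, so shrinking the ambient set does not force us to re-select it. Consequently the induction hypothesis on $\mathcal F\upharpoonright k$ is not actually needed, and the base case for the single set $F_k$ alone drives the whole argument.
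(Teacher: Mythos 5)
Your proof is correct: the definition of $P$-sharp is monotone under shrinking the ambient open set except for the single nonemptiness requirement $U'\cap F_k\ne\emptyset$, which is assumed, and your observation that the inner witness $U'''\subseteq U''$ need not be re-selected is exactly right. The paper omits the proof as trivial, and your argument is precisely the intended one.
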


\begin{proposition}\label{sharp2}
Suppose $\mathcal F$ is $P$-sharp in $U$. Then for any $m<{\rm lh}(\mathcal F)$, $\mathcal F\upharpoonright m$ is $P$-sharp in some open set $U'\subseteq U$.
\end{proposition}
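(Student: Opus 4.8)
The plan is to unwind the recursive clause (ii) of the definition of $P$-sharpness one layer at a time, producing a nested family of open sets. Write $k+1={\rm lh}(\mathcal F)$, so that $\mathcal F=\langle F_0,\dots,F_k\rangle$ and $\mathcal F\upharpoonright m=\langle F_0,\dots,F_{m-1}\rangle$ for $1\le m\le k$. Since $P$-sharpness is only defined for nonempty sequences, the implicit range of the statement is $1\le m<{\rm lh}(\mathcal F)$, and it is this range that I would address.

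The core observation I would isolate is a \emph{one-step reduction}: if $\mathcal G$ is $P$-sharp in $W$ and ${\rm lh}(\mathcal G)\ge 2$, then $\mathcal G\upharpoonright({\rm lh}(\mathcal G)-1)$ is $P$-sharp in some open $W'\subseteq W$. This is immediate from clause (ii): writing $\mathcal G=\langle G_0,\dots,G_j\rangle$ with $j\ge 1$, the hypothesis that $G_j$ is $P$-sharp in $W$ forces $W\cap G_j\ne\emptyset$ (by clause (i) applied to the singleton $G_j$), so clause (ii) applied with the admissible choice $U'=W$ yields an open $W'\subseteq W$ in which $\mathcal G\upharpoonright j=\mathcal G\upharpoonright({\rm lh}(\mathcal G)-1)$ is $P$-sharp.

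With this reduction in hand, I would prove the proposition by a finite downward induction on the length. Starting from $W_{k+1}:=U$, in which $\mathcal F\upharpoonright(k+1)=\mathcal F$ is $P$-sharp by hypothesis, I would repeatedly apply the one-step reduction to obtain open sets $U\supseteq W_k\supseteq\cdots\supseteq W_m$ with $\mathcal F\upharpoonright\ell$ being $P$-sharp in $W_\ell$ for each $\ell$ from $k$ down to $m$; each step is legitimate because the sequence being shortened has length $\ell+1\ge m+1\ge 2$. Setting $U'=W_m\subseteq U$ then gives that $\mathcal F\upharpoonright m$ is $P$-sharp in $U'$, as required.

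There is no genuine obstacle here, which is why the authors call the statement trivial: it is a bookkeeping consequence of the recursive definition. The only points demanding a little care are the implicit convention $m\ge 1$ (so that $\mathcal F\upharpoonright m$ is a legal nonempty tuple), the verification that $U'=W$ is an admissible test open set at each application of clause (ii) — this is precisely where singleton-sharpness of the trailing set, via clause (i), is invoked — and the observation that the chosen open sets nest downward, so that the terminal $W_m$ is indeed contained in the original $U$.
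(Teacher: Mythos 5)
Your proof is correct, and it is exactly the argument the authors had in mind when they declared the proposition trivial and omitted the proof: one unwinds clause (ii) of the definition with the admissible test set $U'=W$ (nonempty intersection with the last coordinate being guaranteed by clause (i)), and iterates downward from length ${\rm lh}(\mathcal F)$ to $m$, noting the nesting of the resulting open sets. Your remark about the implicit convention $m\ge 1$ is also the right reading of the statement.
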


The following lemma is modified from \cite[Lemma 4.3.6]{semmes}.

\begin{lemma}\label{sharp}
Suppose $\mathcal F=\langle F_0,\cdots,F_k\rangle$ is $P$-sharp in $U$. Let $(C_l)_{l<m}$ be a sequence of pairwise disjoint closed subsets of $P$. Then there exist at most $k+1$ many $l$ such that $\mathcal F$ is not $P\setminus C_l$-sharp in any open set $U'\subseteq U$.
\end{lemma}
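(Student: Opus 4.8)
The plan is to induct on the length $k+1$ of $\mathcal F$. Throughout I write
$$L_U=\{l<m:\mathcal F\text{ is not }(P\setminus C_l)\text{-sharp in any open }U'\subseteq U\},$$
so that the goal is $\abs{L_U}\le k+1$. Two elementary facts will be used repeatedly. First, since the $C_l$ are pairwise disjoint, for $l\ne l'$ we have $P=(P\setminus C_l)\cup(P\setminus C_{l'})$, hence $f^{-1}(P)=f^{-1}(P\setminus C_l)\cup f^{-1}(P\setminus C_{l'})$. Second, as each $C_l$ is closed and $f$ is ${\bf\Sigma}^0_3$-measurable, $f^{-1}(C_l)$ is ${\bf\Pi}^0_3$, so $f^{-1}(P\setminus C_l)=f^{-1}(P)\setminus f^{-1}(C_l)$ is ${\bf\Sigma}^0_3$ relative to $f^{-1}(P)$. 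These are exactly what is needed to feed pieces into Proposition~\ref{proposition}.

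For the base case $k=0$ I would show that at most one $l$ lies in $L_U$. Suppose $l_1\ne l_2$ both belong to $L_U$. Unwinding the definition of sharpness for the single set $F_0$, failure of $(P\setminus C_{l_i})$-sharpness in every open subset of $U$ means that the open set
$$W_i=\bigcup\{U'\subseteq U\text{ open}:f\upharpoonright(f^{-1}(P\setminus C_{l_i})\cap F_0\cap U')\in\d23\}$$
meets every open set meeting $F_0\cap U$; that is, $W_i\cap F_0$ is dense in $F_0\cap U$. Writing $W_i$ as a countable union of such $U'$ by second countability and applying Proposition~\ref{proposition} gives $f\upharpoonright(f^{-1}(P\setminus C_{l_i})\cap F_0\cap W_i)\in\d23$. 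Since $W_1\cap W_2$ is open and $(W_1\cap W_2)\cap F_0$ is again dense, hence nonempty, in $F_0\cap U$, I can restrict both decompositions to $U^\ast=W_1\cap W_2$ and combine them through $f^{-1}(P)=f^{-1}(P\setminus C_{l_1})\cup f^{-1}(P\setminus C_{l_2})$ together with the ${\bf\Sigma}^0_3$-relativity noted above; Proposition~\ref{proposition} then yields $f\upharpoonright(f^{-1}(P)\cap F_0\cap U^\ast)\in\d23$. As $U^\ast\subseteq U$ is open with $U^\ast\cap F_0\ne\emptyset$, this contradicts the $P$-sharpness of $F_0$ in $U$.

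For the inductive step, suppose the claim holds for sequences of length $k$ and let $\mathcal F=\langle F_0,\dots,F_k\rangle$ be $P$-sharp in $U$. I would first peel off the top set: the base case applied to $F_k$ shows that at most one index $l$ has $F_k$ not $(P\setminus C_l)$-sharp in any open subset of $U$. Using Proposition~\ref{sharp2} to fix an open $U_0\subseteq U$ on which $\mathcal F\upharpoonright k$ is $P$-sharp, and Proposition~\ref{sharp1} to see that $\mathcal F$ stays $P$-sharp in $U_0$, the inductive hypothesis applied to $\mathcal F\upharpoonright k$ in $U_0$ bounds by $k$ the number of $l$ with $\mathcal F\upharpoonright k$ not $(P\setminus C_l)$-sharp in any open subset of $U_0$. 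The remaining task is to show that every $l\in L_U$ is accounted for either by the single top-level failure of $F_k$ or by one of these at most $k$ failures of $\mathcal F\upharpoonright k$, which then gives $k+1$.

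The hard part will be precisely this last bookkeeping, because the open sets witnessing the various sharpness statements depend on $l$. For $l\in L_U$ with $F_k$ still $(P\setminus C_l)$-sharp somewhere, the recursive definition only guarantees that $\mathcal F\upharpoonright k$ fails $(P\setminus C_l)$-sharpness in some open set $V_l$ that varies with $l$, whereas the inductive hypothesis counts indices failing in one common region. I expect to resolve this by a descent that re-applies the base case inside shrinking neighborhoods: after localizing via Propositions~\ref{sharp1} and~\ref{sharp2} to a region where $\mathcal F\upharpoonright k$ is $P$-sharp, at most one further index can acquire a top-level failure of $F_k$ there, while the density of the regions carrying $(P\setminus C_l)$-local decompositions—exactly as exploited in the base case—lets me intersect finitely many of them and contradict the $P$-sharpness of $\mathcal F$ once more than $k+1$ indices survive. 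Forcing these index-dependent open sets to align into a single region on which Proposition~\ref{proposition} applies is the crux of the whole argument.
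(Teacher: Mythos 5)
Your base case is correct and essentially the paper's argument in a slightly different packaging: instead of fixing one witnessing open set $U_0$ for the first bad index and deriving the contradiction inside it, you take the union $W_i$ of all witnessing open sets, observe it is dense along $F_0\cap U$, glue by Proposition~\ref{proposition}, and intersect. That works. The problem is the inductive step: you stop exactly at the point you yourself call ``the crux,'' namely aligning the index-dependent open sets $V_l$ in which $\mathcal F\upharpoonright k$ fails to be $(P\setminus C_l)$-sharp, and the mechanism you hint at (intersecting dense families of local decomposition regions, as in the base case) does not carry over to $k>0$: for a sequence of length $>1$, failure of $\mathcal F$ to be $(P\setminus C_l)$-sharp in every open subset of $U$ does not produce a dense open set of regions carrying $\d23$ decompositions; it only produces, for each such open set, a smaller open set meeting $F_k$ on which $\mathcal F\upharpoonright k$ fails everywhere. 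There is also a small slip earlier: the $U_0$ you get from Proposition~\ref{sharp2} satisfies $U_0\cap F_{k-1}\ne\emptyset$ but need not meet $F_k$, so Proposition~\ref{sharp1} does not let you conclude that $\mathcal F$ is still $P$-sharp in $U_0$.

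The missing idea is a \emph{nested descent} exploiting the downward monotonicity of the property ``not $(P\setminus C_l)$-sharp in any open subset of $V$'' in $V$. Suppose toward a contradiction that, beyond the at most one index (say $l=0$) killed at the top level, there are $k+1$ further bad indices $l=1,\dots,k+1$. By the base-case analysis one first fixes $U_0\subseteq U$ meeting $F_k$ such that $F_k$ is $(P\setminus C_l)$-sharp in $U_0$ for all $l\ne 0$. Then one builds a chain $U_{k+1}\subseteq\cdots\subseteq U_1\subseteq U_0$, each meeting $F_k$: at stage $l$, since index $l$ is bad but $F_k$ is $(P\setminus C_l)$-sharp in $U_{l-1}$, the failure of $\mathcal F$ must occur in the second clause of the definition, yielding $U_l\subseteq U_{l-1}$ with $U_l\cap F_k\ne\emptyset$ such that $\mathcal F\upharpoonright k$ is not $(P\setminus C_l)$-sharp in any open subset of $U_l$ --- and this failure is automatically inherited by every later, smaller $U_{l'}$. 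At the end, $\mathcal F$ is $P$-sharp in $U_{k+1}$, so by definition $\mathcal F\upharpoonright k$ is $P$-sharp in some $U^*\subseteq U_{k+1}$, in which all $k+1$ indices $1,\dots,k+1$ simultaneously fail for $\mathcal F\upharpoonright k$; this contradicts the induction hypothesis, which allows at most $k$. Without this chain (or an equivalent alignment device) the count $k+1$ is not established, so as written the proof is incomplete.
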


\begin{proof}
We begin with $k=0$. Without loss of generality, suppose there exists an $l<m$, say, $l=0$, such that $F_0$ is not $P\setminus C_0$-sharp in $U$. Then there exists an open set $U_0\subseteq U$ with $U_0\cap F_0\ne\emptyset$ such that
$$f\upharpoonright(f^{-1}(P\setminus C_0)\cap F_0\cap U_0)\in\d23.$$
Assume for contradiction that there exists $l\ne 0$ such that $F_0$ is not $P\setminus C_l$-sharp in $U_0$, then there is an open set $U_l\subseteq U_0$ with $U_l\cap F_0\ne\emptyset$ such that
$$f\upharpoonright(f^{-1}(P\setminus C_l)\cap F_0\cap U_l)\in\d23.$$
Since $C_0$ and $C_l$ are disjoint closed subsets of $P$, Proposition~\ref{proposition} gives
$$f\upharpoonright(f^{-1}(P)\cap F_0\cap U_l)\in\d23,$$
contradicting that $F_0$ is $P$-sharp in $U$.

For $k>0$, assume that we have proved for all $k'<k$. Since $F_k$ is $P$-sharp in $U$, from the arguments for $k=0$ above, we may assume that there is an open set $U_0\subseteq U$ with $U_0\cap F_k\ne\emptyset$ such that $F_k$ is $P\setminus C_l$-sharp in $U_0$ for any $l\ne 0$. Assume for contradiction that there are $k+1$ many $l\ne 0$, say, $l=1,\cdots,k,k+1$, such that $\mathcal F$ is not $P\setminus C_l$-sharp in any open set $U'\subseteq U_0$. Particularly, $\mathcal F$ is not $P\setminus C_1$-sharp in $U_0$, so there exists an open set $U_1\subseteq U_0$ with $U_1\cap F_k\ne\emptyset$ such that $\mathcal F\upharpoonright k$ is not $P\setminus C_1$-sharp in any open set $U'\subseteq U_1$. Similarly, we can find a sequence of open sets $U_{k+1}\subseteq U_k\subseteq\cdots\subseteq U_1\subseteq U_0$ such that $U_l\cap F_k\ne\emptyset$ and $\mathcal F\upharpoonright k$ is not $P\setminus C_l$-sharp in any $U'\subseteq U_l$ for $0<l\le k+1$. By Definition of $P$-sharp, there is a open set $U^*\subseteq U_{k+1}$ such that $\mathcal F\upharpoonright k$ is $P$-sharp in $U^*$, contradicting the induction hypothesis.
\end{proof}

Let $\ulcorner\cdot,\cdot\urcorner$ be the bijection: $\omega\times\omega\to\omega$ as following:
$$\ulcorner 0,0\urcorner=0,$$
$$\ulcorner 0,j+1\urcorner=\ulcorner j,0\urcorner+1,$$
$$\ulcorner i+1,j-1\urcorner=\ulcorner i,j\urcorner+1.$$

Denote
$$\Omega=\{z\in 2^\omega:\exists i\exists^\infty j(z(\ulcorner i,j\urcorner)=1)\}.$$
It is well known that $\Omega$ is ${\bf\Sigma}^0_3$-complete subset of $2^\omega$.

For any $z\in 2^\omega$ and $l<\omega$, we call sequence
$$z\upharpoonright(\ulcorner 0,l\urcorner+1),\;z\upharpoonright(\ulcorner 1,l-1\urcorner+1),\;\cdots,\;z\upharpoonright(\ulcorner l,0\urcorner+1)$$
the $l$-th {\bf diagonal} of $z$, and call $z\upharpoonright(\ulcorner l,0\urcorner+1)$ the end of $l$-th diagonal. For $s\in 2^{<\omega}$, we denote ${\rm lh}(s)=i$ the length of $s$. If $s\subseteq z$ and ${\rm lh}(s)=\ulcorner i,j\urcorner+1$, then $s$ is in $(i+j)$-th diagonal. Moreover, the $l$-th diagonal of $z$ is also named the $l$-th diagonal of $s$ when $\ulcorner l,0\urcorner<{\rm lh}(s)$.

For $s\ne\emptyset$, let ${\rm lh}(s)=\ulcorner i,j\urcorner+1$. We denote ${\rm row}(s)=i,{\rm col}(s)=j$. If $i+j>0$, we call $s\upharpoonright(\ulcorner i+j-1,0\urcorner+1)$ {\bf the end of the last diagonal} of $s$, denoted by $u(s)$. If $j>0$, we call $s\upharpoonright(\ulcorner i,j-1\urcorner+1)$ {\bf the left neighbor} of $s$, denoted by $v(s)$.

For proving the following theorem, we need an order $\preceq$ on $2^{<\omega}$ define by
$$t\preceq s\iff{\rm lh}(t)<{\rm lh}(s)\mbox{ or }({\rm lh}(t)={\rm lh}(s),t\le_{\rm lex}s),$$
where $\le_{\rm lex}$ is the usual lexicographical order. We also denote $t\prec s$ when $t\preceq s$ but not $t=s$. Denote
$$s_M={\textstyle\max_\preceq}\{t:t\prec s^\smallfrown 0\}.$$

\begin{theorem}\label{3to2}
Let $X$ be a Polish space, $Y$ a separable metrizable space, and let $f:X\to Y$. Then
$$f^{-1}{\bf\Sigma}^0_2\subseteq{\bf\Sigma}^0_3\iff f\in\d23.$$
\end{theorem}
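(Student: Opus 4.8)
The plan is to establish both directions of the equivalence. The easy direction is ($\Leftarrow$): assuming $f \in \d23$, we show $f^{-1}{\bf\Sigma}^0_2 \subseteq {\bf\Sigma}^0_3$. By definition there is a partition $(X_k)$ of $X$ into ${\bf\Delta}^0_3$ sets with each $f \upharpoonright X_k$ being ${\bf\Sigma}^0_2$-measurable. For any ${\bf\Sigma}^0_2$ set $A \subseteq Y$, we have $f^{-1}(A) = \bigcup_k (f\upharpoonright X_k)^{-1}(A)$, and each $(f\upharpoonright X_k)^{-1}(A)$ is ${\bf\Sigma}^0_2$ relative to $X_k$, hence ${\bf\Sigma}^0_3$ in $X$ (since $X_k \in {\bf\Delta}^0_3$). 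A countable union of ${\bf\Sigma}^0_3$ sets is ${\bf\Sigma}^0_3$, so $f^{-1}(A) \in {\bf\Sigma}^0_3$. This direction is routine and I would dispatch it quickly.

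The substantive direction is ($\Rightarrow$): assuming $f^{-1}{\bf\Sigma}^0_2 \subseteq {\bf\Sigma}^0_3$ (equivalently that $f$ is ${\bf\Sigma}_{3,2}$), I must produce a decomposition witnessing $f \in \d23$. I would argue by contradiction: suppose $f \notin \d23$. The strategy is to build, using the machinery of $P$-sharp sequences together with Lemma~\ref{PDtoVAF}, a tree-indexed system of closed sets and associated target sets in $Y$ that ultimately encodes the ${\bf\Sigma}^0_3$-complete set $\Omega$ into a ${\bf\Sigma}^0_2$ preimage, violating the hypothesis $f^{-1}{\bf\Sigma}^0_2 \subseteq {\bf\Sigma}^0_3$. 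Concretely, I expect to recursively assign to each node $s \in 2^{<\omega}$ an open set $U_s$, a $P_s$-sharp sequence $\mathcal F_s$, and a target region $P_s \subseteq Y$, organized along the diagonal structure of $\ulcorner\cdot,\cdot\urcorner$ so that progress along rows and columns mirrors the two-quantifier structure defining $\Omega$. At each extension step I would invoke Lemma~\ref{PDtoVAF} (with $D = f^{-1}(\overline V^c) \cap F \cap U$ for suitable $V, F, U$ inherited from the sharp data) to locate a new basic open $V$, a dense piece $G$, and a closed $F$ preserving non-decomposability under $h = f$ of Baire class $2$; Lemma~\ref{sharp} guarantees that splitting $P$ along finitely many disjoint closed pieces preserves sharpness of $\mathcal F_s$ in some sub-open-set for all but at most $\lh(\mathcal F_s)+1$ of them, which is what keeps the branching available.

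The main obstacle will be the bookkeeping that synchronizes the diagonal indexing with the topological recursion so that the resulting embedding genuinely reduces $\Omega$. I must ensure that the nested open sets $U_s$ shrink to points (using completeness of $X$ to get a limit point in each branch), that the maps $s \mapsto V_s$ converge in $Y$ along branches so a continuous reduction map is defined, and crucially that membership of a branch's limit point image in the encoded ${\bf\Sigma}^0_2$ set corresponds exactly to the $\exists i\, \exists^\infty j$ pattern in $\Omega$. The order $\preceq$ and the operators $u(s), v(s), s_M$ are evidently introduced precisely to manage which previously-committed data each node inherits, and verifying that the sharpness hypotheses propagate correctly through $u(s)$ and $v(s)$ — so that Lemma~\ref{PDtoVAF} is always applicable at the next node — is where the delicate finite-injury-style argument lives. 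Once the reduction is built, the contradiction is immediate: $f^{-1}$ of the encoded set would be ${\bf\Sigma}^0_2$ yet correspond to the ${\bf\Sigma}^0_3$-complete $\Omega$, contradicting ${\bf\Sigma}^0_3$-completeness together with the hypothesis $f^{-1}{\bf\Sigma}^0_2 \subseteq {\bf\Sigma}^0_3$.
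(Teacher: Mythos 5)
You have correctly identified the paper's strategy for the hard direction --- assume $f\notin\d23$, use Lemma~\ref{PDtoVAF} together with the $P$-sharp machinery to build a continuous embedding $\psi:2^\omega\to X$ and an open $O\subseteq Y$ with $\psi^{-1}(f^{-1}(O))=\Omega$, and contradict $f\jr23$ --- and your easy direction is fine. But the proposal defers the entire inductive construction to ``bookkeeping,'' and that is where all of the mathematical content lives, so as written there is a genuine gap. Concretely, what is missing is the mechanism by which the $\exists i\,\exists^\infty j$ structure of $\Omega$ emerges: the paper classifies each node $s$ with ${\rm lh}(s)=\ulcorner i,j\urcorner+1$ as an \emph{inheritor} or an \emph{innovator} according to whether row $i$ is still active on the current diagonal; inheritors carry forward $V_s,G_s,F_s$ from the left neighbor $v(s)$ and, at the end of a run, force $U_s\subseteq\bigcap_{n\le{\rm lh}(s)}W_n(G_s)$ where $f^{-1}(V)=\bigcup_m\bigcap_nW^m_n(V)$, which is exactly what guarantees $\psi(z)\in G\subseteq f^{-1}(O)$ when $z\in\Omega$; innovators must choose fresh data via Lemma~\ref{PDtoVAF} applied to $P=P_{s_M}\setminus\bigcup_{k<l}\overline{V^k}$ while explicitly arranging $U_{s^\smallfrown i}\cap F^L\cap A=\emptyset$ for the set $A$ of all previously used $G^m(V_t)$, which is what guarantees $\psi(z)\notin f^{-1}(O)$ when $z\notin\Omega$. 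None of this follows from the sharpness lemmas alone; it must be encoded in the inductive hypotheses (the paper's clauses (0)--(10)) and verified at each step, and this is where the finite-injury flavor you allude to actually does work.

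Two smaller corrections. First, your closing contradiction is misstated: it is not that ``$f^{-1}$ of the encoded set would be ${\bf\Sigma}^0_2$.'' Rather, the encoded set $Y\setminus O$ is closed, hence ${\bf\Sigma}^0_2$ in $Y$, so the hypothesis $f\jr23$ would force $f^{-1}(Y\setminus O)\in{\bf\Sigma}^0_3$, whereas the reduction $\psi$ makes $f^{-1}(Y\setminus O)$ ${\bf\Pi}^0_3$-complete (its $\psi$-preimage is $2^\omega\setminus\Omega$), hence not ${\bf\Sigma}^0_3$. Second, Lemma~\ref{PDtoVAF} is not invoked at every node as you suggest: in the paper it is applied only at innovator steps (in an $\omega$-length auxiliary recursion producing $V^l,G^l,F^l$), while inheritor steps merely shrink $U$ and pass the data along; and the bound in Lemma~\ref{sharp} is $k+1={\rm lh}(\mathcal F)$ exceptional indices for $\mathcal F=\langle F_0,\cdots,F_k\rangle$, not ${\rm lh}(\mathcal F)+1$.
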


\begin{proof}
The ``$\Leftarrow$'' part is trivial, we only prove the ``$\Rightarrow$'' part.

Assume for contradiction that $f\notin\d23$. We will define a continuous embedding $\psi:2^\omega\to X$ and an open set $O\subseteq Y$ such that
$$\psi^{-1}(f^{-1}(O))=\Omega.$$
Thus $f^{-1}(Y\setminus O)$ is ${\bf\Pi}^0_3$-complete subset of $X$, contradicting $f^{-1}{\bf\Sigma}^0_2\subseteq{\bf\Sigma}^0_3$.

For any open set $V\subseteq Y$, since $f^{-1}(V)$ is ${\bf\Sigma}^0_3$, we can fix a system of open set $W^m_n(V)\subseteq X\;(m,n<\omega)$ with
$$f^{-1}(V)=\bigcup_m\bigcap_nW^m_n(V).$$
Denote $G^m(V)=\bigcap_nW^m_n(V)$. For $G=G^m(V)$, we also denote $W_n(G)=W^m_n(V)$.

For $s\in 2^{<\omega}$ with $s\ne\emptyset$, let ${\rm lh}(s)=\ulcorner i,j\urcorner+1$. We say $s$ is an {\bf inheritor} if $j>0$ and $s(\ulcorner k,i+j-k\urcorner)=0$ for any $k<i$, otherwise we say $s$ is an {\bf innovator}. Note that $s$ is always an inheritor if $i=0$, $j>0$, and is always an innovator if $j=0$.

Fix a compatible metric $d$ on $X$ with $d\le 1$. We will inductively construct, for each $s\in 2^{<\omega}$, an open set $V_s$ of $Y$, a $G_\delta$ set $G_s$ of $X$, a closed set $F_s$ of $X$, an open set $U_s$ of $X$, and a sequence of open sets $(U_s^w)_{s\preceq w\prec s^\smallfrown 0}$ of $X$ satisfying the following:
\begin{enumerate}
\item[(0)] ${\rm diam}(\overline{U_s})\le 2^{-{\rm lh}(s)}$, $U_{s^\smallfrown 0}\cap U_{s^\smallfrown 1}=\emptyset$, $\overline{U_{s^\smallfrown 0}}\subseteq U_s^w$, $\overline{U_{s^\smallfrown 1}}\subseteq U_s^w$;
\item[(1)] $V_{s^\smallfrown 0}=V_{s^\smallfrown 1}$, $G_{s^\smallfrown 0}=G_{s^\smallfrown 1}$, $F_{s^\smallfrown 0}=F_{s^\smallfrown 1}$;
\item[(2)] for $s,t\in 2^{<\omega}$, we have $V_s=V_t$ or $\overline{V_s}\cap\overline{V_t}=\emptyset$;
\item[(3)] there exist $m<\omega$ such that $G_s=G^m(V_s)$;
\item[(4)] if ${\rm col}(s)>0$, then $F_{s^\smallfrown 0}=F_{s^\smallfrown 1}\subseteq F_s$;
\item[(5)] $F_s\cap U_s^w\ne\emptyset$ for each $w$;
\item[(6)] $U_s=U_s^s$, and $U_s^{w_1}\supseteq U_s^{w_2}$ for $w_1\preceq w_2$;
\item[(7)] $G_s\cap F_s$ is dense in $F_s$;
\item[(8)] if $s$ is an inheritor, then we have
$$V_s=V_{v(s)},\quad G_s=G_{v(s)},\quad F_s=F_{v(s)};$$
furthermore, (a) if $s^\smallfrown 0$ is also an inheritor, then $U_s^w\cap F_{u(s)}\ne\emptyset$ for each $w$; (b) if $s^\smallfrown 0$ is an innovator, then $U_s\subseteq W_n(G_s)$ for all $n<{\rm lh}(s)$;
\item[(9)] if $s$ is an innovator, then $U_s\cap F_s\cap G^m(V_t)=\emptyset$ for all $m<{\rm lh}(s)$ and all ${\rm lh}(t)<{\rm lh}(s)$;
\item[(10)] by letting $P_s=Y\setminus\bigcup_{t\preceq s}\overline{V_t}$,
$$\mathcal F_s=\langle F_{s\upharpoonright({\rm lh}(s)-{\rm row}(s))},\cdots,F_{s\upharpoonright({\rm lh}(s)-1)},F_s\rangle,$$
for $t\preceq s\prec t^\smallfrown 0$, we have
    \\(a) if $t^\smallfrown 0$ is an innovator, then $\mathcal F_t$ is $P_s$-sharp in $U_t^s$;
    \\(b) if $t^\smallfrown 0$ is an inheritor, then $\mathcal F_{u(t^\smallfrown 0)}$ is $P_s$-sharp in $U_t^s$.
\end{enumerate}

When we complete the construction, for any $z\in 2^\omega$, we set $\psi(z)$ to be the unique element of $\bigcap_kU_{z\upharpoonright k}$. From (0) and (6), we see that $\psi$ is a continuous embedding from $2^\omega$ to $X$. Put
$$O=\bigcup_{t\in 2^{<\omega}}V_t.$$

If $z\in\Omega$, let $i_0$ be the least $i$ such that there are infinitely many $j$ with $z(\ulcorner i,j\urcorner)=1$. Then there is $J_0<\omega$ such that $z(\ulcorner i,j\urcorner)=0$ for all $i<i_0$ and all $j>J_0$. Hence for any $j>J_0$, $z\upharpoonright(\ulcorner i_0,j\urcorner+1)$ is an inheritor. Denote
$$V=V_{z\upharpoonright(\ulcorner i_0,J_0\urcorner+1)},\quad G=G_{z\upharpoonright(\ulcorner i_0,J_0\urcorner+1)}.$$
By (8), we have $V=V_{z\upharpoonright(\ulcorner i_0,j\urcorner+1)}$ and $G=G_{z\upharpoonright(\ulcorner i_0,j\urcorner+1)}$ for all $j\ge J_0$. If $j>J_0$ with $z(\ulcorner i_0,j\urcorner)=1$, by (8)(b), we have $\psi(z)\in W_n(G)$ for all $n\le\ulcorner i_0,j\urcorner$. Since there is infinitely many such $j$, we have $\psi(z)\in G\subseteq f^{-1}(V)$. Therefore, $f(\psi(z))\in V\subseteq O$.

If $z\notin\Omega$, we show that $f(\psi(z))\notin O$. If not, there exits $t_1$ and $m_1$ such that $\psi(z)\in G^{m_1}(V_{t_1})$. Fix an $i_1>\max\{m_1,{\rm lh}(t_1)\}$. Since $z\notin\Omega$, for large enough $j$, we have $z(\ulcorner i,j\urcorner)=0$ for all $i<i_1$, so $z\upharpoonright(\ulcorner i_1,j\urcorner+1)$ is an inheritor. Let $J_1$ be the largest $j$ such that $z\upharpoonright(\ulcorner i_1,j\urcorner+1)$ is an innovator. Denote
$$F=F_{z\upharpoonright(\ulcorner i_1,J_1\urcorner+1)}.$$
By (8), we have $F=F_{z\upharpoonright(\ulcorner i_1,j\urcorner+1)}$ for all $j\ge J_1$. From (5) and (6), we see that $F\cap U_{z\upharpoonright(\ulcorner i_1,j\urcorner+1))}\ne\emptyset$ for any $j>J_1$, and hence $\psi(z)\in F$. It follows from (9) that $F\cap U_{z\upharpoonright(\ulcorner i_1,J_1\urcorner+1)}\cap G^{m_1}(V_{t_1})=\emptyset$. Thus $\psi(z)\notin G^{m_1}(V_{t_1})$. A contradiction!

Now we turn to the construction.

First, set $D,P,h,\mathcal B_P,\mathcal G_V$ as follows:
\begin{enumerate}
\item[(i)]$P=Y$, $D=X$, and $h=f$;
\item[(ii)] $\mathcal B_P$ is a countable basis of $Y$;
\item[(iii)] for each $V\in\mathcal B_P$, let $\mathcal G_V=\{G^m(V):m<\omega\}$.
\end{enumerate}
Applying Lemma~\ref{PDtoVAF} with these $D,P,h,\mathcal B_P,\mathcal G_V$, we get an open set $V\subseteq Y$, a $G_\delta$ set $G\in\mathcal G_V$, and a non-empty closed set $F\subseteq X$. Then put
$$V_\emptyset=V,\quad G_\emptyset=G,\quad F_\emptyset=F,\quad U_\emptyset=X.$$

Secondly, assume that we have constructed $V_t,G_t,F_t,U_t,$ and $U_t^w$ for $t,w\prec s^\smallfrown 0$. We will define for $s^\smallfrown 0$ and $s^\smallfrown 1$. We consider the following two cases:

{\sl Case 1.} Assume $s^\smallfrown 0$ is an inheritor. Let $v=v(s^\smallfrown i),u=u(s^\smallfrown i)$. For $i=0,1$, put
$$V_{s^\smallfrown i}=V_v,\quad G_{s^\smallfrown i}=G_v,\quad F_{s^\smallfrown i}=F_v.$$
Note that $s=u$ or $s$ is also an inheritor with $u(s)=u$. By (5) and (8)(a), $U_s^{s_M}\cap F_u\ne\emptyset$.

{\sl Subcase 1.1.} If ${\rm col}(s^\smallfrown 0^\smallfrown 0)>0$, then $s^\smallfrown 0^\smallfrown 0$ is still an inheritor. We can find a $U_{s^\smallfrown 0}$ such that
$$\overline{U_{s^\smallfrown 0}}\subseteq U_s^{s_M},\quad U_{s^\smallfrown 0}\cap F_u\ne\emptyset,\quad{\rm diam}(\overline{U_{s^\smallfrown 0}})\le 2^{-({\rm lh}(s)+1)}.$$
By (10)(b), we see $\mathcal F_u$ is $P_{s_M}$-sharp in $U_s^{s_M}$. By Proposition~\ref{sharp2}, $\mathcal F_v$ is $P_{s_M}$-sharp in some open set $U\subseteq U_s^{s_M}$, and hence $U\cap F_v\ne\emptyset$. Denote $W=\bigcap_{n\le{\rm lh}(s)}W_n(G_v)$. Since $W\supseteq G_v$, by (7) we have $W\cap F_v$ is open dense in $F_v$, and hence $W\cap U\cap F_v\ne\emptyset$. We can find $U_{s^\smallfrown 1}$ such that
$$\overline{U_{s^\smallfrown 1}}\subseteq U\cap W,\quad U_{s^\smallfrown 1}\cap F_v\ne\emptyset,\quad{\rm diam}(\overline{U_{s^\smallfrown 1}})\le 2^{-({\rm lh}(s)+1)}.$$
By shrinking we may assume that $U_{s^\smallfrown 0}\cap U_{s^\smallfrown 1}=\emptyset$. For $i=0,1$, we set $U_{s^\smallfrown 0}^{s^\smallfrown i}=U_{s^\smallfrown 0}$, $U_{s^\smallfrown 1}^{s^\smallfrown 1}=U_{s^\smallfrown 1}$, and for other $t$, set $U_t^{s^\smallfrown i}=U_t^{s_M}$.

{\sl Subcase 1.2.} If ${\rm col}(s^\smallfrown 0^\smallfrown 0)=0$, then $s^\smallfrown 0^\smallfrown 0$ is an innovator. We define $U_{s^\smallfrown i}$ for $i=0,1$ similar to $U_{s^\smallfrown 1}$ in Subcase 1.1 with one more requirement $U_{s^\smallfrown 0}\cap U_{s^\smallfrown 1}=\emptyset$.

It is trivial to check clauses (0)--(9). Note that $P_{s^\smallfrown 0}=P_{s^\smallfrown 1}=P_{s_M}$ and $\mathcal F_{s^\smallfrown 0}=\mathcal F_{s^\smallfrown 1}=\mathcal F_v$. Since (10) holds for $s_M$, it also holds for $s^\smallfrown 0$ and $s^\smallfrown 1$.

{\sl Case 2.} Assume $s^\smallfrown 0$ is an innovator. We inductively define $V^l,G^l,F^l$ and $U^l$ for each $l<\omega$ as the following:

Since $s\preceq s_M\prec s^\smallfrown 0$, by (10)(a), we have $\mathcal F_s$ is $P_{s_M}$-sharp in $U_s^{s_M}$. So
$$f\upharpoonright(f^{-1}(P_{s_M})\cap F_s\cap U_s^{s_M})\notin\d23.$$
Denote $F^{-1}=F_s,U^{-1}=U_s^{s_M}$. Assume that we have defined $V^k,G^k,F^k$ and $p^k$ for $k<l$. Set $D,P,h,\mathcal B_P,\mathcal G_V$ as follows:
\begin{enumerate}
\item[(i)] $P=P_{s_M}\setminus\bigcup_{k<l}\overline{V^k}$, $D=f^{-1}(P)\cap F^{l-1}\cap U^{l-1}$, and $h=f\upharpoonright D$;
\item[(ii)] $\mathcal B_P$ is a countable basis of $P$ such that $\overline{V}\subseteq P$ for each $V\in\mathcal B_P$;
\item[(iii)] for each $V\in\mathcal B_P$, let $\mathcal G_V=\{D\cap G^m(V):m<\omega\}$.
\end{enumerate}
Applying Lemma~\ref{PDtoVAF} with these $D,P,h,\mathcal B_P,\mathcal G_V$, we get an open set $V\subseteq Y$, a $G_\delta$ set $G=G^m(V)$ for some $m<\omega$, and a closed set $F\subseteq\overline D\subseteq F^{l-1}\subseteq F_s$ with $F\cap U^{l-1}\supseteq F\cap D\ne\emptyset$. Denote $V^l=V, G^l=G, F^l=F$. If $\mathcal F_s^\smallfrown F^l$ is $P_{s_M}\setminus\overline{V^l}$-sharp in $U^{l-1}$, set $U^l=U^{l-1}$. Otherwise, since Lemma~\ref{PDtoVAF} implies that $F^l$ is $P_{s_M}\setminus\overline{V^l}$-sharp in $U^{l-1}$, we can find an open set $U^l\subseteq U^{l-1}$ with $U^l\cap F^l\ne\emptyset$ such that $\mathcal F_s$ is not $P_{s_M}\setminus\overline{V^l}$-sharp in any open set $U\subseteq U^l$. This complete the induction.

For $s\prec t\preceq s_M$, if $t^\smallfrown 0$ is an innovator, it follows from (10)(a) that $\mathcal F_t$ is $P_{s_M}$-sharp in $U_t^{s_M}$. By Lemma~\ref{sharp}, we can find a natural number $L_t$ such that, for any $l\ge L_t$, $\mathcal F_t$ is $P_{s_M}\setminus\overline{V^l}$-sharp in some open set $U_t^l\subseteq U_t^{s_M}$. If $t^\smallfrown 0$ is an inheritor, from (10)(b) and Lemma~\ref{sharp}, we can also find a natural number $L_t$ such that, for any $l\ge L_t$, $\mathcal F_{u(t^\smallfrown 0)}$ is $P_{s_M}\setminus\overline{V^l}$-sharp in some open set $U_t^l\subseteq U_t^{s_M}$. Moreover, assume for contradiction that there exist $l_0<\cdots<l_m$ with $m={\rm lh}(\mathcal F_s)$ such that $\mathcal F_s$ is not $P_{s_M}\setminus\overline{V^{l_j}}$-sharp in any open set $U\subseteq U^{l_j}$ for $j\le m$. This contradicts Lemma~\ref{sharp}, because $U^{l_m}\subseteq U_s^{s_M}$ and $U^{l_m}\cap F^{l_m}\ne\emptyset$ implies that $\mathcal F_s$ is $P_{s_M}$-sharp in $U^{l_m}$. Therefore, comparing with the definition of $U^l$, we can find an natural number $L'$ such that, for any $l\ge L'$, $\mathcal F_s^\smallfrown F^l$ is $P_{s_M}\setminus\overline{V^l}$-sharp in $U^l$. Then we set
$$L=\max\{L',L_t:s\prec t\preceq s_M\}$$
and $U_t^{s^\smallfrown 0}=U_t^{s^\smallfrown 1}=U_t^L$ for $t\prec s^\smallfrown 0\prec t^\smallfrown 0$, i.e., for $s\prec t\preceq s_M$.

In the end, denote
$$A=\bigcup_{{\rm lh}(t)\le{\rm lh}(s)}\bigcup_{m\le{\rm lh}(s)}G^m(V_t).$$
Then $A$ is $G_\delta$ set. By (3) and $V^L\subseteq P_{s_M}$, we have $G^L\cap A=\emptyset$. It follows from Lemma~\ref{PDtoVAF} that $G^L\cap F^L$ is dense in $F^L$, so $A\cap F^L$ is nowhere dense in $F^L$. We can find two open sets $U_{s^\smallfrown 0}$ and $U_{s^\smallfrown 1}$ such that $U_{s^\smallfrown 0}\cap U_{s^\smallfrown 1}=\emptyset$, and for $i=0,1$, we have
$$\overline{U_{s^\smallfrown i}}\subseteq U^L,\quad U_{s^\smallfrown i}\cap F^L\ne\emptyset,\quad U_{s^\smallfrown i}\cap F^L\cap A=\emptyset,\quad{\rm diam}(\overline{U_{s^\smallfrown i}})\le 2^{-({\rm lh}(s)+1)}.$$
Now put
$$V_{s^\smallfrown i}=V^L,\quad G_{s^\smallfrown i}=G^L,\quad F_{s^\smallfrown i}=F^L,$$
and $U_{s^\smallfrown 0}^{s^\smallfrown i}=U_{s^\smallfrown 0},U_{s^\smallfrown 1}^{s^\smallfrown 1}=U_{s^\smallfrown 1}$.
\end{proof}

\begin{corollary}\label{cor1}
Let $X$ be a Polish space, $Y$ a separable metrizable space, and let $f:X\to Y$. If $f\notin\d23$, then there exists a Cantor set $C\subseteq X$ such that $f\upharpoonright C\notin\d23$.
\end{corollary}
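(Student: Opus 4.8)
The plan is to read off the desired Cantor set directly from the proof of Theorem~\ref{3to2}. Indeed, the hypothesis $f\notin\d23$ is precisely the assumption under which that proof produces a continuous embedding $\psi:2^\omega\to X$ together with an open set $O\subseteq Y$ satisfying $\psi^{-1}(f^{-1}(O))=\Omega$. I would set $C=\psi(2^\omega)$. Since $2^\omega$ is compact and $\psi$ is a continuous injection into the Hausdorff space $X$, the map $\psi$ is a homeomorphism onto $C$; hence $C$ is a Cantor set, and in particular $C$ is a compact, thus Polish, subspace of $X$.

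It then remains to verify that $f\upharpoonright C\notin\d23$. Here I would apply Theorem~\ref{3to2} to the function $f\upharpoonright C:C\to Y$, whose domain $C$ is Polish and whose codomain $Y$ is separable metrizable; by that theorem it suffices to exhibit a ${\bf\Sigma}^0_2$ subset of $Y$ whose preimage under $f\upharpoonright C$ fails to be ${\bf\Sigma}^0_3$ in $C$. The natural candidate is the closed, hence ${\bf\Sigma}^0_2$, set $Y\setminus O$.

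To compute the complexity of $(f\upharpoonright C)^{-1}(Y\setminus O)$ I would transport the question to $2^\omega$ along the homeomorphism $\psi$. Because $\psi$ is a homeomorphism of $2^\omega$ onto $C$, it preserves the relative Borel hierarchy, so the set $(f\upharpoonright C)^{-1}(Y\setminus O)$ is ${\bf\Sigma}^0_3$ in $C$ if and only if its $\psi$-preimage is ${\bf\Sigma}^0_3$ in $2^\omega$. Since $\psi$ takes values in $C$, that preimage is
$$\psi^{-1}\bigl(f^{-1}(Y\setminus O)\bigr)=2^\omega\setminus\psi^{-1}\bigl(f^{-1}(O)\bigr)=2^\omega\setminus\Omega.$$
As $\Omega$ is ${\bf\Sigma}^0_3$-complete it cannot be ${\bf\Pi}^0_3$, the Borel hierarchy on $2^\omega$ being strict, so $2^\omega\setminus\Omega$ is not ${\bf\Sigma}^0_3$. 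Hence $(f\upharpoonright C)^{-1}(Y\setminus O)$ is not ${\bf\Sigma}^0_3$ in $C$, which gives $(f\upharpoonright C)^{-1}{\bf\Sigma}^0_2\not\subseteq{\bf\Sigma}^0_3$, and Theorem~\ref{3to2} yields $f\upharpoonright C\notin\d23$, as required.

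I expect no serious obstacle, since all the substantive work is already encapsulated in the construction carried out inside Theorem~\ref{3to2}. The only points demanding (routine) care are that $\psi$ is a genuine topological embedding, so that $C$ is indeed a Cantor set and Borel ranks transfer faithfully between $C$ and $2^\omega$, together with the standard observation that the ${\bf\Sigma}^0_3$-completeness of $\Omega$ prevents $2^\omega\setminus\Omega$ from being ${\bf\Sigma}^0_3$.
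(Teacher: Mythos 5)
Your proposal is correct and is exactly the paper's argument: the authors also take $C=\psi(2^\omega)$ for the embedding $\psi$ constructed in Theorem~\ref{3to2}, leaving implicit the verification you spell out (that $\psi^{-1}((f\upharpoonright C)^{-1}(Y\setminus O))=2^\omega\setminus\Omega$ is not ${\bf\Sigma}^0_3$, so the easy direction of Theorem~\ref{3to2} forces $f\upharpoonright C\notin\d23$). No issues.
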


\begin{proof}
Let $\psi$ be the continuous embedding defined in Theorem~\ref{3to2}. Put $C=\psi(2^\omega)$.
\end{proof}

\section{The decomposability conjecture for $m=n=3$}

Before proving Theorem \ref{main} for $m=n=3$, we prove a known result first: for functions of Baire class $1$,
$$f^{-1}{\bf\Sigma}^0_3\subseteq{\bf\Sigma}^0_3\Rightarrow f\in\d13.$$
This is an easy corollary of Solecki's theorem (see \cite[Theorem 4.1]{solecki1}), since $f\in\d13\iff f\in{\rm dec}({\bf\Sigma}^0_1)$ and $P^{-1}{\bf\Sigma}^0_3\not\subseteq{\bf\Sigma}^0_3$. Furthermore, this result is also a special case of \cite[Corollary 1.2]{PS}, \cite[Corollary 5.11]{motto_ros}, or \cite[Theorem 1.1]{GKN}. In order to show a completely different method of proof, we present a direct proof which follows the same idea as in the previous section. The readers can skip directly to Theorem~\ref{3to1}.

\begin{lemma}\label{PDtoVFE}
Let $X,P$ be two separable metrizable spaces, and let $D\subseteq X$, and $h:D\to P$ a function of Baire class $1$. Let $\mathcal B_P$ be a countable topological basis of $P$. If $h\notin\d13$, then there exist a $V\in\mathcal B_P$ and two closed sets $E\subseteq F\subseteq\overline D$ satisfying:
\begin{enumerate}
\item[(a)] for any open set $U$ with $E\cap U\ne\emptyset$, we have
$$h\upharpoonright(h^{-1}(V)\cap U\cap E)\notin\d13;$$
\item[(b)] for any open set $U$ with $F\cap U\ne\emptyset$, we have
$$h\upharpoonright(h^{-1}(\overline{V}^c)\cap F\cap U)\notin\d13;$$
\item[(c)] $E\cap D\ne\emptyset$.
\end{enumerate}
\end{lemma}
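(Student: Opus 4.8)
The plan is to imitate the proof of Lemma~\ref{PDtoVAF}, but to run \emph{two} nested Cantor--Bendixson-style derivatives: one producing $F$ (for clause (b)) and a second, carried out \emph{inside} $F$, producing $E$ (for clause (a)). Fix a basis $\{U_k:k<\omega\}$ of $X$. For $V\in\mathcal B_P$ and closed $F\subseteq X$ set
\[\Gamma_V^-(F)=\{k:h\upharpoonright(h^{-1}(\overline{V}^c)\cap F\cap U_k)\in\d13\},\quad\Theta_V^-(F)=\{x\in F:\forall k\,(x\in U_k\Rightarrow k\notin\Gamma_V^-(F))\},\]
and symmetrically $\Gamma_V^+,\Theta_V^+$ with $h^{-1}(\overline{V}^c)$ replaced by $h^{-1}(V)$. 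Iterating $\Theta_V^-$ from $X$ gives a closed $F_V\subseteq\overline{D}$ with $\Theta_V^-(F_V)=F_V$; iterating $\Theta_V^+$ from $F_V$ gives a closed $E_V\subseteq F_V$ with $\Theta_V^+(E_V)=E_V$. As in Lemma~\ref{PDtoVAF}, second countability makes each derivative stabilize at a countable stage and leaves only countably many distinct sets $F_V^\alpha,E_V^\alpha$. If some $V\in\mathcal B_P$ has $E_V\cap D\neq\emptyset$, I would take $E=E_V$ and $F=F_V$: the two fixed-point identities give exactly clauses (a) and (b), since any open $U$ with $E\cap U\neq\emptyset$ contains a basic $U_k$ meeting $E$, and the hereditarity of $\d13$ to subsets transfers non-membership from $U_k$ to $U$; and $E_V\cap D\neq\emptyset$ is clause (c).

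So the real work is to contradict the assumption that $E_V\cap D=\emptyset$ for every $V\in\mathcal B_P$. Whenever a point is deleted at a successor stage it is deleted through some basic $U_k$, so I would collect the good pieces $h\upharpoonright(h^{-1}(\overline{V}^c)\cap F_V^\alpha\cap U_k)\in\d13$ (for $k\in\Gamma_V^-(F_V^\alpha)$) and $h\upharpoonright(h^{-1}(V)\cap E_V^\alpha\cap U_k)\in\d13$ (for $k\in\Gamma_V^+(E_V^\alpha)$). Writing $H_1$ for the union of the sets $h^{-1}(\overline{V}^c)\cap F_V^\alpha\cap U_k$ and $H_2$ for the union of the sets $h^{-1}(V)\cap E_V^\alpha\cap U_k$, both are countable unions of ${\bf\Sigma}^0_2$ sets, and Proposition~\ref{proposition} yields $h\upharpoonright(D\cap H)\in\d13$ for $H=H_1\cup H_2$. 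The hypothesis $E_V\cap D=\emptyset$ guarantees that every $x\in D$ is deleted in one of the two derivatives for each $V$; choosing $V\ni h(x)$ then puts $x$ either into $H_2$ (if it is deleted in the $E$-derivative, since then $h(x)\in V$) or into a $\overline{V}^c$-sharp piece of the $F$-derivative, and $H_1$ is arranged so that any $x\in D\setminus H$ lying in such a piece satisfies $h(x)\in\overline{V}$.

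I expect the residual set $D\setminus H$ to be the main obstacle, and here the Baire class $1$ situation is genuinely harder than the Baire class $2$ case. In Lemma~\ref{PDtoVAF} the pointing property $h(x)\in\overline{V}$ lets one approximate $h$ uniformly by Baire class $1$ maps on $D\setminus H$, and a Baire class $1$ function is automatically in $\d23$; but a Baire class $1$ function need \emph{not} lie in $\d13$ (the Pawlikowski function is the standard witness), so uniform approximation by maps of small oscillation is \emph{not} enough here. Indeed, the pointing property only shows that for each $n$ there is a countable ${\bf\Delta}^0_2$ partition of $D\setminus H$ on whose pieces $h$ has oscillation $\le 1/n$, a property the Pawlikowski function itself enjoys. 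Thus the residual must be shown to lie in $\d13$, i.e. to admit countably many ${\bf\Delta}^0_3$ pieces on which $h$ is actually \emph{continuous}.

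My plan for this decisive step is to refine the partition using the $F_\sigma$-decompositions $h^{-1}(V)=\bigcup_m C^V_m$ afforded by $h$ being of Baire class $1$. On $D\setminus H$ the relation $h(x)\in\overline{V}$ forces, for every small $V\ni h(x)$, that $h$ is $\d13$-good on the part of each relevant piece mapping into $\overline{V}^c$, so that near $x$ the function is controlled off an arbitrarily small target ball; converting this ``continuity away from the value $h(x)$'' into genuinely continuous ${\bf\Delta}^0_3$ pieces is the crux, and I expect it to require an additional (possibly transfinite) peeling rather than a single uniform-limit argument. Once $h\upharpoonright(D\setminus H)\in\d13$ is established, combining it with $h\upharpoonright(D\cap H)\in\d13$ through Proposition~\ref{proposition} contradicts $h\notin\d13$ and completes the proof.
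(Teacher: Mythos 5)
Your setup (the two nested derivatives, the sets $H_1,H_2$ of ``good pieces'', and the reduction to showing that the residual $D\setminus H$ lies in $\d13$) is sound and parallels the paper's argument, which in fact gets away with a single-step kernel $F^B=\{x:\forall k\,(x\in U_k\Rightarrow h\upharpoonright(h^{-1}(B^c)\cap U_k)\notin\d13)\}$ rather than a transfinite iteration. You also correctly diagnose that the uniform-approximation trick from Lemma~\ref{PDtoVAF} cannot work here, since a Baire class $1$ function need not be in $\d13$. But at exactly that point your proof stops: you declare the treatment of the residual to be ``the crux'' and offer only a plan (``an additional, possibly transfinite, peeling'' via the $F_\sigma$-decompositions of $h^{-1}(V)$) without carrying it out. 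That is a genuine gap, and it is the entire content of the lemma beyond bookkeeping.

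What you are missing is that the residual is not merely a set where $h$ has small oscillation relative to each $V$ --- it is a set on which $h$ is \emph{continuous}. In the paper's version, a point $x\in D\setminus(H_1\cup H_2)$ satisfies $h(x)\in B\Rightarrow x\notin F^B$ and $h(x)\notin B\Rightarrow x\in F^B$ for every basic $B$, so $(h\upharpoonright H_3)^{-1}(B)=H_3\setminus F^B$ is relatively open and $h\upharpoonright H_3$ is continuous. (Your two-derivative version yields the slightly weaker dichotomy $h(x)\in V\Rightarrow x\notin F_V$ and $h(x)\notin\overline V\Rightarrow x\in F_V$, which still gives continuity by passing to $V$ with $\overline V$ inside a given neighborhood of $h(x)$; but you never extract this.) Continuity alone is still not enough, because $H_3$ need not be ${\bf\Sigma}^0_3$ in $D$; the final step is to extend $h\upharpoonright H_3$ by Kuratowski's theorem to a continuous $g$ on a $G_\delta$ set $G\supseteq H_3$ and to replace $H_3$ by the agreement set $H=\{x\in D\cap G:h(x)=g(x)\}$, which is $G_\delta$ in $D$ \emph{because $h$ is of Baire class $1$} --- this is where the hypothesis enters, and it produces a ${\bf\Sigma}^0_3$ piece on which $h$ is genuinely continuous. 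With $H_1$ an $F_\sigma$, $H_2$ a ${\bf\Sigma}^0_3$ set, and $H$ a $G_\delta$ covering $D$, Proposition~\ref{proposition} gives $h\in\d13$, the desired contradiction. Without this continuity-plus-extension argument (or a worked-out substitute), your proposal does not prove the lemma.
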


\begin{proof}
Let $\{U_k:k<\omega\}$ be a topological basis of $X$. For any $B\in\mathcal B_P$, we denote
$$F^B=\{x\in X:\forall k(x\in U_k\Rightarrow h\upharpoonright(h^{-1}(B^c)\cap U_k)\notin\d13)\}.$$
It is trivial to see that
\begin{enumerate}
\item[(i)] $F^B$ is closed,
\item[(ii)] $h\upharpoonright(h^{-1}(B^c)\setminus F^B)\in\d13$, and
\item[(iii)] for any open set $U$ with $F^B\cap U\ne\emptyset$,
$$h\upharpoonright(h^{-1}(B^c)\cap F^B\cap U)\notin\d13.$$
\end{enumerate}

Assume for contradiction that, $h\upharpoonright(h^{-1}(B)\cap F^B)\in\d13$ for any $B\in\mathcal B_P$. We denote
$$H_1=\bigcup_{B\in\mathcal B_P}(h^{-1}(B)\cap F^B),$$
$$H_2=\bigcup_{B\in\mathcal B_P}(h^{-1}(B^c)\setminus F^B).$$
It is straightforward to check that, $h\upharpoonright H_i\in\d13$ for $i=1,2$.

Denote $H_3=D\setminus(H_1\cup H_2)$. For any $x\in H_3$ and any $B\in\mathcal B_P$, we have
$$h(x)\in B \Rightarrow x\in h^{-1}(B) \Rightarrow x\notin F^B,$$
$$h(x)\notin B\Rightarrow x\in h^{-1}(B^c) \Rightarrow x\in F^B.$$
So $h\upharpoonright H_3$ is continuous.

Let $\tilde Y\supseteq Y$ be a Polish space. By Kuratowski's theorem (cf. \cite[(3.8)]{kechris}), there is a $G_\delta$ set $G\supseteq H_3$ and a continuous function $g:G\to\tilde Y$ such that $g\upharpoonright H_3=h\upharpoonright H_3$. Put $H=\{x\in D\cap G:h(x)=g(x)\}$. Since $h$ is of Baire class $1$, we see $H$ is $G_\delta$ subset of $D$ and $H_1\cup H_2\cup H=D$. Note that
$H_1$ is $F_\sigma$ subset of $D$ and $H_2$ is ${\bf\Sigma}^0_3$ subset of $D$. It follows that $h\in\d13$. A contradiction!

Therefore, there exists a $B\in\mathcal B_P$ such that
$$h\upharpoonright(h^{-1}(B)\cap F^B)\notin\d13.$$
Since $B=\bigcup\{V\in\mathcal B_P:\overline{V}\subseteq B\}$,
we can find a $V\in\mathcal B_P$ with $\overline{V}\subseteq B$ such that
$$h\upharpoonright(h^{-1}(V)\cap F^B)\notin\d13.$$

In the end, define
$$E=\{x\in F^B:\forall k(x\in U_k\Rightarrow h\upharpoonright(h^{-1}(V)\cap U_k)\notin\d13)\}.$$
Then we have $E\cap D\ne\emptyset$, and
$$h\upharpoonright(h^{-1}(V)\cap U\cap E)\notin\d13$$
for any open set with $U\cap E\ne\emptyset$. Note that $\overline{V}^c\supseteq B^c$. So $V,E$ and $F^B$ satisfy clauses (a)--(c) as desired.
\end{proof}

In the rest of this section, we fix $X$ be a Polish space, $Y$ a separable metrizable space, and $f:X\to Y$ a ${\bf\Sigma}^0_3$-measurable function.

\begin{definition}
Let $\mathcal F=\langle F_0,\cdots,F_k\rangle$ be a finite sequence of closed sets of $X$ with $F_0\supseteq\cdots\supseteq F_k$, $U$ an open subset of $X$, and let $\mathcal P=\langle P_0,\cdots,P_k\rangle$ be a sequence of pairwise disjoint subsets of $Y$.
\begin{enumerate}
\item[(i)] If $k=0$, i.e., $\mathcal F=\langle F_0\rangle,\mathcal P=\langle P_0\rangle$, then we say $\mathcal F$ is $\mathcal P$-{\bf sharp} in $U$ if $U\cap F_0\ne\emptyset$, and for any open set $U'\subseteq U$ with $U'\cap F_0\ne\emptyset$, we have
$$f\upharpoonright(f^{-1}(P_0)\cap F_0\cap U')\notin\d13.$$
We also say $F_0$ itself is $P_0$-sharp in $U$ for brevity.
\item[(ii)] If $k>0$, then we say $\mathcal F$ is $\mathcal P$-{\bf sharp} in $U$ if $F_k$ is $P_k$-sharp in $U$, and for any open set $U'\subseteq U$ with $U'\cap F_k$, $\mathcal F\upharpoonright k$ is $\mathcal P\upharpoonright k$-sharp in some open set $U''\subseteq U'$.
\end{enumerate}
\end{definition}

\begin{proposition}\label{shp1}
Suppose $\mathcal F=\langle F_0,\cdots,F_k\rangle$ is $\mathcal P$-sharp in $U$. Then for any $U'\subseteq U$ with $U'\cap F_k\ne\emptyset$, we have $\mathcal F$ is $\mathcal P$-sharp in $U'$.
\end{proposition}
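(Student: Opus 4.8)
The plan is to prove the statement by a straightforward unwinding of the definition of $\mathcal P$-sharpness, exactly parallel to the omitted proof of Proposition~\ref{sharp1}. The single observation that drives everything is that ``$\mathcal F$ is $\mathcal P$-sharp in $U$'' is defined purely in terms of quantification over open \emph{subsets} of $U$: in the base case over open $W\subseteq U$ meeting $F_0$, and in the case $k>0$ over open $W\subseteq U$ meeting $F_k$. Hence passing from $U$ to a smaller open set $U'\subseteq U$ with $U'\cap F_k\ne\emptyset$ only shrinks the family over which we quantify, so every clause of the definition is inherited.

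Concretely, I would first treat the length-one case $\mathcal F=\langle F_0\rangle$, $\mathcal P=\langle P_0\rangle$. Given that $F_0$ is $P_0$-sharp in $U$ and that $U'\subseteq U$ meets $F_0$, the non-emptiness requirement $U'\cap F_0\ne\emptyset$ holds by hypothesis, while any open $W\subseteq U'$ meeting $F_0$ is in particular an open subset of $U$ meeting $F_0$, whence $f\upharpoonright(f^{-1}(P_0)\cap F_0\cap W)\notin\d13$ is inherited from $P_0$-sharpness in $U$; thus $F_0$ is $P_0$-sharp in $U'$. For $k>0$, suppose $\mathcal F$ is $\mathcal P$-sharp in $U$ and $U'\subseteq U$ with $U'\cap F_k\ne\emptyset$. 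The first clause, that $F_k$ be $P_k$-sharp in $U'$, follows from the length-one case applied to $\langle F_k\rangle$, $\langle P_k\rangle$. The second clause, that for every open $W\subseteq U'$ meeting $F_k$ the truncation $\mathcal F\upharpoonright k$ be $\mathcal P\upharpoonright k$-sharp in some open $W'\subseteq W$, transfers verbatim from the corresponding clause for $U$, because any such $W$ is also an open subset of $U$ meeting $F_k$. Both clauses hold, so $\mathcal F$ is $\mathcal P$-sharp in $U'$.

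I expect no genuine obstacle: the entire content is the monotonicity of the nested ``for all open subsets'' quantifiers under shrinking the ambient set, and the verification is pure bookkeeping. The one point worth flagging is that the second clause needs no induction hypothesis---it transfers directly since $W\subseteq U'\subseteq U$---so the argument does not in fact require induction on $k$ at all; the length-one case together with the direct transfer of the second clause suffices. This mirrors why the author labels the analogous Propositions~\ref{sharp1} and~\ref{sharp2} ``trivial.''
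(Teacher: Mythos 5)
Your proof is correct and is exactly the routine unwinding of the definition that the paper has in mind when it omits the proof as trivial: both clauses of $\mathcal P$-sharpness quantify only over open subsets of the ambient set meeting $F_k$ (resp.\ $F_0$), so they are inherited verbatim by any smaller open $U'\subseteq U$ with $U'\cap F_k\ne\emptyset$. Your observation that no induction on $k$ is needed is also accurate.
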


\begin{proposition}\label{shp2}
Suppose $\mathcal F$ is $\mathcal P$-sharp in $U$. Then for any $m<{\rm lh}(\mathcal F)$, $\mathcal F\upharpoonright m$ is $\mathcal P\upharpoonright m$-sharp in some open set $U'\subseteq U$.
\end{proposition}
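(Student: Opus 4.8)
The statement is the sequence-analogue of Proposition~\ref{sharp2}, and like it the proof is a short iteration of the recursive clause (ii) in the definition of $\mathcal P$-sharpness; there is no genuine obstacle, only some index bookkeeping. Writing $\mathcal F=\langle F_0,\cdots,F_k\rangle$ and $\mathcal P=\langle P_0,\cdots,P_k\rangle$, so that ${\rm lh}(\mathcal F)=k+1$, the case $m=0$ is vacuous (the empty sequence), so I would treat $1\le m\le k$. Throughout I would note that restricting $\mathcal P$ keeps it a sequence of pairwise disjoint subsets and restricting $\mathcal F$ keeps it a decreasing sequence of closed sets, so each pair $\mathcal F\upharpoonright j$, $\mathcal P\upharpoonright j$ is a legitimate input to the definition of sharpness.

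The key step is a single downward descent: if $\mathcal F$ with $k\ge 1$ is $\mathcal P$-sharp in $U$, then $\mathcal F\upharpoonright k$ is $\mathcal P\upharpoonright k$-sharp in some open $U'\subseteq U$. Here I would exploit that, by the base clause (i), the hypothesis that $F_k$ is $P_k$-sharp in $U$ already forces $U\cap F_k\ne\emptyset$. Hence I may instantiate the universally quantified $U'$ appearing in clause (ii) by $U'=U$ itself: since $U\cap F_k\ne\emptyset$, clause (ii) yields directly that $\mathcal F\upharpoonright k$ is $\mathcal P\upharpoonright k$-sharp in some open $U''\subseteq U$, which is exactly the claim.

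Finally I would iterate this one-step descent. Starting from $\mathcal F$, which is $\mathcal P$-sharp in $U$, the descent produces an open $U_k\subseteq U$ in which $\mathcal F\upharpoonright k$ is $\mathcal P\upharpoonright k$-sharp; applying the descent again, now to a sequence whose top index is $k-1$ (valid as long as $k-1\ge 1$), gives $U_{k-1}\subseteq U_k$ in which $\mathcal F\upharpoonright(k-1)$ is $\mathcal P\upharpoonright(k-1)$-sharp; and so on. After $k-m+1$ steps I obtain a nested chain $U_m\subseteq U_{m+1}\subseteq\cdots\subseteq U_k\subseteq U$ with $\mathcal F\upharpoonright m$ being $\mathcal P\upharpoonright m$-sharp in $U_m$, and since $U_m\subseteq U$ this is the desired conclusion. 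The only point needing a word of care is that each descent requires the top index of the current sequence to be positive so that clause (ii) is the operative one; this holds throughout, since we descend from $\mathcal F\upharpoonright(j+1)$ only for $j\ge m\ge 1$, and we stop at the terminal sequence $\mathcal F\upharpoonright m$, whose sharpness is governed by clause (i).
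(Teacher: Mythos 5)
Your proof is correct and is precisely the intended argument: the paper omits the proof of this proposition as trivial (just as it does for its Section~3 analogue, Proposition~\ref{sharp2}), and your one-step descent --- using that $P_k$-sharpness of $F_k$ forces $U\cap F_k\ne\emptyset$, so clause (ii) may be instantiated with $U'=U$ --- iterated down to length $m$, is exactly the evident unfolding of the definition. No gaps.
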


Let $\mathcal P=\langle P_0,\cdots,P_k\rangle$, $0\le j\le l$, and let $C\subseteq P_j$. We denote
$$\mathcal P\setminus C=\langle P_0,\cdots,P_j\setminus C,\cdots,P_k\rangle.$$

\begin{lemma}\label{shp}
Let $\mathcal F=\langle F_0,\cdots,F_k\rangle,\mathcal P=\langle P_0,\cdots,P_k\rangle$. Suppose $\mathcal F$ is $P$-sharp in $U$. Let $0\le j\le k$ and $(C_l)_{l<m}$ be a sequence of pairwise disjoint closed subsets of $P_j$. Then there exist at most one $l$ such that $\mathcal F$ is not $\mathcal P\setminus C_l$-sharp in any open set $U'\subseteq U$.
\end{lemma}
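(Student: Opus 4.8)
The plan is to induct on $k=\mathrm{lh}(\mathcal F)-1$, exploiting that passing from $\mathcal P$ to $\mathcal P\setminus C_l$ alters only the single coordinate $P_j$, so any failure of sharpness must be localized to level $j$. Throughout I call an index $l$ \emph{bad} if $\mathcal F$ is not $(\mathcal P\setminus C_l)$-sharp in any open $U'\subseteq U$, and the goal is to show at most one $l$ is bad. I will use freely that $\d13$ is preserved under restriction to subsets (an immediate consequence of the ${\bf\Sigma}^0_3$-covering characterization in Proposition~\ref{proposition}), and that badness is hereditary downward: if $\mathcal F$ is not $(\mathcal P\setminus C_l)$-sharp in any sub-open of $W$, the same holds for any smaller $W''\subseteq W$.

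For the base case $k=0$ (so $j=0$, $\mathcal F=\langle F_0\rangle$, $\mathcal P=\langle P_0\rangle$), I argue directly. Suppose $l_0\ne l_1$ were both bad. Badness of $l_0$ in $U$ produces an open $U_0\subseteq U$ with $U_0\cap F_0\ne\emptyset$ and $f\upharpoonright(f^{-1}(P_0\setminus C_{l_0})\cap F_0\cap U_0)\in\d13$; badness of $l_1$ in $U_0$ produces $U_1\subseteq U_0$ with $U_1\cap F_0\ne\emptyset$ and $f\upharpoonright(f^{-1}(P_0\setminus C_{l_1})\cap F_0\cap U_1)\in\d13$. Restricting the first to $U_1$, both restrictions hold over $F_0\cap U_1$. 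Since $C_{l_0},C_{l_1}$ are disjoint closed subsets of $P_0$, I may write $P_0\setminus C_{l_i}=P_0\cap W_i$ with $W_i\subseteq Y$ open and $P_0\subseteq W_0\cup W_1$; then each $f^{-1}(W_i)$ is ${\bf\Sigma}^0_3$, so the two sets above are ${\bf\Sigma}^0_3$ subsets of $f^{-1}(P_0)\cap F_0\cap U_1$ whose union is all of it. Proposition~\ref{proposition} (iii)$\Rightarrow$(i) gives $f\upharpoonright(f^{-1}(P_0)\cap F_0\cap U_1)\in\d13$, contradicting that $F_0$ is $P_0$-sharp in $U$.

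For the inductive step I split on the position of $j$. If $j=k$, the modification touches only the top coordinate $P_k$ and leaves $\mathcal P\upharpoonright k$ untouched. The base-case argument applied to $F_k,P_k$ yields at most one index $l$ for which $F_k$ fails to be $(P_k\setminus C_l)$-sharp in every sub-open of $U$; for any other $l$ there is an open $U_0\subseteq U$ with $F_k$ $(P_k\setminus C_l)$-sharp in $U_0$, and since $(\mathcal P\setminus C_l)\upharpoonright k=\mathcal P\upharpoonright k$ while $\mathcal F$ is $\mathcal P$-sharp in $U_0$ (Proposition~\ref{shp1}), the recursive clause of the definition shows $\mathcal F$ is $(\mathcal P\setminus C_l)$-sharp in $U_0$; hence at most one bad $l$. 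If $j<k$, then $P_k$ is unchanged, so by Proposition~\ref{shp1} the top level $F_k$ is always $P_k$-sharp on sub-opens of $U$, and any failure of $(\mathcal P\setminus C_l)$-sharpness can only arise from the recursive clause. Concretely, for each bad $l$ the family $\mathcal D_l$ of opens $W$ meeting $F_k$ in which $\mathcal F\upharpoonright k$ is not $((\mathcal P\upharpoonright k)\setminus C_l)$-sharp in any sub-open is downward closed and dense below $U$ (relative to $F_k$), using $(\mathcal P\setminus C_l)\upharpoonright k=(\mathcal P\upharpoonright k)\setminus C_l$.

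Given two bad indices $l_0\ne l_1$, density and downward closure let me pick a single open $U_b\subseteq U$ meeting $F_k$ with $U_b\in\mathcal D_{l_0}\cap\mathcal D_{l_1}$; applying the recursive clause for $\mathcal F$ I find $U_c\subseteq U_b$ in which $\mathcal F\upharpoonright k$ is $(\mathcal P\upharpoonright k)$-sharp, while $l_0,l_1$ remain bad for $\mathcal F\upharpoonright k$ in $U_c$. Since $0\le j\le k-1$, the induction hypothesis applied to $\mathcal F\upharpoonright k$ in $U_c$ forbids two bad indices, a contradiction. I expect the main obstacle to be precisely this last case: correctly unwinding the nested ``for every sub-open there is a sub-open'' definition of sharpness so that badness localizes to a common open set on which the shorter sequence is genuinely sharp, which is what makes the induction hypothesis applicable. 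The bookkeeping that $(\mathcal P\setminus C_l)\upharpoonright k$ equals $(\mathcal P\upharpoonright k)\setminus C_l$ when $j<k$ but equals $\mathcal P\upharpoonright k$ when $j=k$ is exactly what separates the two cases and must be tracked with care.
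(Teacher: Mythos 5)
Your proof is correct and follows essentially the same route as the paper's: induction on the length of $\mathcal F$, with the base case resolved by combining the two decompositions over disjoint closed sets via Proposition~\ref{proposition}, and the inductive step split into $j=k$ (where only the top level is affected) and $j<k$ (where the failure is pushed down to $\mathcal F\upharpoonright k$ on a common open set so the induction hypothesis applies). Your density/downward-closure phrasing in the $j<k$ case is just a repackaging of the paper's explicit construction of the nested opens $U_0\supseteq U_1\supseteq U^*$, and your explicit justification of the Proposition~\ref{proposition} step in the base case is a welcome amplification of what the paper leaves implicit.
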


\begin{proof}
We begin with $k=j=0$. Without loss of generality, suppose there exists an $l<m$, say, $l=0$, such that $F_0$ is not $P_0\setminus C_0$-sharp in $U$. Then there exists an open set $U_0\subseteq U$ with $U_0\cap F_0\ne\emptyset$ such that
$$f\upharpoonright(f^{-1}(P_0\setminus C_0)\cap F_0\cap U_0)\in\d13.$$
Assume for contradiction that there exists $l\ne 0$ such that $F_0$ is not $P_0\setminus C_l$-sharp in $U_0$, then there is an open set $U_l\subseteq U_0$ with $U_l\cap F_0\ne\emptyset$ such that
$$f\upharpoonright(f^{-1}(P_0\setminus C_l)\cap F_0\cap U_l)\in\d13.$$
Since $C_0$ and $C_l$ are disjoint closed subsets of $P_0$, Proposition~\ref{proposition} gives
$$f\upharpoonright(f^{-1}(P_0)\cap F_0\cap U_l)\in\d13,$$
contradicting that $F_0$ is $P_0$-sharp in $U$.

For $k>0$, assume that we have proved for all $k'<k$.

{\sl Case 1.} If $j=k$, since $F_k$ is $P_k$-sharp in $U$, from the arguments for $k=0$ above, we may assume that there is an open set $U_0\subseteq U$ with $U_0\cap F_k\ne\emptyset$ such that $F_k$ is $P_k\setminus C_l$-sharp in $U_0$ for any $l\ne 0$. It follows that $\mathcal F$ is $\mathcal P\setminus C_l$-sharp $U_0$ for any $l\ne 0$.

{\sl Case 2.} If $j<k$, assume for contradiction that there are more than one $l$, say, $l=0,1$, such that $\mathcal F$ is not $\mathcal P\setminus C_l$-sharp in any open set $U'\subseteq U$. Particularly, $\mathcal F$ is not $\mathcal P\setminus C_0$-sharp in $U$. Note that $F_k$ is $P_k\setminus C_l$-sharp in $U$ for any $l<m$, so there exists an $U_0\subseteq U$ with $U_0\cap F_k\ne\emptyset$ such that $\mathcal F\upharpoonright k$ is not $(\mathcal P\upharpoonright k)\setminus C_0$-sharp in any open set $U'\subseteq U_0$. Similarly, we can find an open set $U_1\subseteq U_0$ with $U_1\cap F_k\ne\emptyset$ such that $\mathcal F\upharpoonright k$ is not $(\mathcal P\upharpoonright k)\setminus C_1$-sharp in any $U'\subseteq U_1$. By Propositions~\ref{shp1} and~\ref{shp2}, there is an open set $U^*\subseteq U_1$ such that $\mathcal F\upharpoonright k$ is $\mathcal P$-sharp in $U^*$, contradicting the induction hypothesis.
\end{proof}

\begin{theorem}\label{3to1part}
Let $X$ be a Polish space, $Y$ a separable metrizable space, and let $f:X\to Y$ be of Baire class $1$. If $f^{-1}{\bf\Sigma}^0_3\subseteq{\bf\Sigma}^0_3$, then $f\in\d13$.
\end{theorem}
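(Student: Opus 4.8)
The plan is to prove the contrapositive: assuming $f \notin \d13$, I will construct a continuous embedding $\psi : 2^\omega \to X$ together with an open set $O \subseteq Y$ witnessing that $f^{-1}{\bf\Sigma}^0_3 \not\subseteq {\bf\Sigma}^0_3$. Specifically, I want $\psi^{-1}(f^{-1}(O)) = \Omega$, so that pulling back the ${\bf\Pi}^0_3$-complete set $2^\omega \setminus \Omega$ shows $f^{-1}(Y \setminus O)$ is genuinely ${\bf\Pi}^0_3$-complete, contradicting the hypothesis. This is the exact strategy used in Theorem~\ref{3to2}, so the whole proof is a parallel construction, with Lemma~\ref{PDtoVFE} playing the role that Lemma~\ref{PDtoVAF} played there, and the refined sequence-version sharpness (Definition preceding Proposition~\ref{shp1}, with $\mathcal P = \langle P_0,\dots,P_k\rangle$ a sequence of pairwise disjoint target sets rather than a single $P$) replacing the earlier single-$P$ sharpness. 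The key structural difference is that $f$ is now Baire class~$1$, so Lemma~\ref{PDtoVFE} produces a \emph{pair} of nested closed sets $E \subseteq F$: $E$ witnesses non-decomposability over $h^{-1}(V)$ and $F$ over $h^{-1}(\overline V^c)$. This richer output is what lets me track both $V$ and $\overline V^c$ simultaneously along the diagonal indexing.

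First I would set up the same bijection $\ulcorner\cdot,\cdot\urcorner$, the diagonal/inheritor/innovator vocabulary, and the order $\preceq$ on $2^{<\omega}$, since the combinatorial skeleton of the construction is identical. Then, by induction on $s \in 2^{<\omega}$, I would build open sets $V_s \subseteq Y$, closed sets $F_s$ (and now also the auxiliary $E_s$ coming from Lemma~\ref{PDtoVFE}), open sets $U_s$ and the auxiliary family $(U_s^w)_{s \preceq w \prec s^\smallfrown 0}$, maintaining a list of invariants analogous to (0)--(10). The crucial invariant to carry is a sharpness clause: letting $\mathcal F_s$ be the tail-segment of the $F$-sequence as before and $\mathcal P_s$ the corresponding sequence of disjoint target pieces built from the $\overline{V_t}$'s, I require $\mathcal F_s$ (or $\mathcal F_{u(s^\smallfrown 0)}$ in the inheritor case) to be $\mathcal P_s$-sharp in $U_s^w$. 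At innovator steps I would invoke Lemma~\ref{PDtoVFE} to split off a new basic open $V^l$ and the closed sets $E^l \subseteq F^l$, then use Lemma~\ref{shp} — which gives \emph{at most one} bad $l$ per coordinate, a sharp improvement over the ``$k+1$ many'' bound of Lemma~\ref{sharp} — to find a large enough index $L$ at which all the relevant segments remain sharp after removing $\overline{V^l}$.

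Once the construction is complete, $\psi(z)$ is the unique point of $\bigcap_k U_{z \upharpoonright k}$, continuous and injective by the diameter and disjointness conditions, and $O = \bigcup_t V_t$. The verification splits into the two cases $z \in \Omega$ and $z \notin \Omega$ exactly as in Theorem~\ref{3to2}: for $z \in \Omega$ the eventually-inheritor tail along row $i_0$ forces $\psi(z)$ into $f^{-1}(V)$ for the stabilized $V$, so $f(\psi(z)) \in O$; for $z \notin \Omega$ I use the stabilized closed set $F$ together with the separation invariant analogous to (9) to push $\psi(z)$ outside every $G^m(V_t)$, hence $f(\psi(z)) \notin O$. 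I expect the main obstacle to be the bookkeeping in the innovator step, specifically ensuring that the Baire-class-$1$ hypothesis is genuinely used: because $E$ and $F$ both appear, one must verify that the new sharpness invariant can be re-established for \emph{both} the $V$-side and the $\overline V^c$-side simultaneously, and that Lemma~\ref{shp}'s ``at most one bad $l$'' bound is strong enough to absorb all finitely many active segments at once. Getting the indices $L_t$ and $L'$ to line up so that a single $L$ works for every segment $s \prec t \preceq s_M$ is the delicate point; everything else is a faithful transcription of the $m=2,n=3$ argument with the sharper counting lemma.
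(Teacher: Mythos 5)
There is a fatal gap at the very first step of your plan. You propose to find an \emph{open} set $O\subseteq Y$ and an embedding $\psi$ with $\psi^{-1}(f^{-1}(O))=\Omega$. This is impossible under the standing hypothesis that $f$ is of Baire class $1$: for open $O$ the preimage $f^{-1}(O)$ is ${\bf\Sigma}^0_2$, hence ${\bf\Delta}^0_3$, and a ${\bf\Delta}^0_3$ set is never ${\bf\Sigma}^0_3$-hard, so its continuous preimage under $\psi$ cannot equal the ${\bf\Sigma}^0_3$-complete set $\Omega$. Equivalently, $f^{-1}(Y\setminus O)$ is ${\bf\Pi}^0_2$ and so cannot be ${\bf\Pi}^0_3$-complete. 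The whole point of the Baire class $1$ case is that open and closed test sets are useless; the witness to $f^{-1}{\bf\Sigma}^0_3\not\subseteq{\bf\Sigma}^0_3$ must live genuinely higher in the hierarchy. The paper instead builds a $G_\delta$ set $G=\bigcap_m G_m$ with $G_m=\bigcup_{{\rm row}(t)=m}V_t$ and arranges the \emph{reversed} reduction $\psi^{-1}(f^{-1}(Y\setminus G))=\Omega$; since $G\in{\bf\Pi}^0_2\subseteq{\bf\Sigma}^0_3$ while $f^{-1}(G)$ is ${\bf\Pi}^0_3$-hard, this contradicts the hypothesis.

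This change is not cosmetic and propagates through the entire construction, so a ``faithful transcription'' of Theorem~\ref{3to2} cannot succeed. In Theorem~\ref{3to2} the $V_s$ are pairwise equal or have disjoint closures, and membership of $f(\psi(z))$ in the open target is certified by trapping $\psi(z)$ in a single stabilized $G_\delta$ piece inside $f^{-1}(V)$. Here the $V_s$ must instead be nested, $\overline{V_{s^\smallfrown i}}\subseteq V_s$, with disjointness imposed only within a row, and the Baire class $1$ hypothesis is used not only through Lemma~\ref{PDtoVFE} but, crucially, through a fixed sequence of continuous functions $f_n\to f$ pointwise: clause (9) of the paper's construction requires $f_n(U_s)\subseteq V_s$ at innovator nodes, and both verification cases run through the convergence of $(f_n(\psi(z)))_n$ (for $z\in\Omega$ these values visit infinitely many pairwise separated $V_t$ in row $i_0$, so the limit avoids $G_{i_0}$; for $z\notin\Omega$ the stabilized nested closures $\overline{V_{m+1}}\subseteq V_m$ force the limit into every $G_m$). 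Your proposal never introduces the approximating sequence $f_n$ and keeps the $\Omega$-versus-open-set orientation of Theorem~\ref{3to2}, so the skeleton you describe cannot produce the required complete set. The ingredients you do identify --- Lemma~\ref{PDtoVFE} and the pair $E\subseteq F$, the sequence-valued sharpness, and the ``at most one bad $l$'' bound of Lemma~\ref{shp} --- all match the paper, but they must be put in service of the $G_\delta$ target and the inverted reduction, which is the missing idea.
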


\begin{proof}
Assume for contradiction that $f\notin\d13$. We will define a continuous embedding $\psi:2^\omega\to X$ and an $G_\delta$ set $G\subseteq Y$ such that $\psi^{-1}(f^{-1}(Y\setminus G))=\Omega$. Thus $f^{-1}(G)$ is ${\bf\Pi}^0_3$-complete subset of $X$, contradicting $f^{-1}{\bf\Sigma}^0_3\subseteq{\bf\Sigma}^0_3$.

It it well known that $Y$ is homeomorphic to a subspace of $\R^\omega$. Without loss of generality, we may assume $Y=\R^\omega$. Granting this assumption, we can fix a sequence of continuous functions $f_n:X\to Y$ pointwisely converging to $f$. Fix a compatible metric $d$ on $X$ with $d\le 1$.

For $s\ne\emptyset$, let ${\rm lh}(s)=\ulcorner i,j\urcorner+1$. Now we {\bf redefine} inheritors and innovators. We say $s$ is an {\bf inheritor} if $j> 0$ and $s(\ulcorner k,i+j-k\urcorner)=0$ for any $k\le i$ (note: it was for any $k<i$ in the definition of inheritor in Theorem~\ref{3to2}), otherwise we say $s$ is an {\bf innovator}. Note that $s$ is always an innovator if $j=0$ or $s(\ulcorner i,j\urcorner)=1$.

We will inductively construct for each $s\in 2^{<\omega}$ an open set $V_s$ of $Y$, two closed sets $E_s,F_s$ of $X$, an open set $U_s$ of $X$, and a sequence of open sets $(U_s^w)_{s\preceq w\prec s^\smallfrown 0}$ of $X$ satisfying the following:
\begin{enumerate}
\item[(0)] ${\rm diam}(\overline{U_s})\le 2^{-{\rm lh}(s)}$, $U_{s^\smallfrown 0}\cap U_{s^\smallfrown 1}=\emptyset$, $\overline{U_{s^\smallfrown 0}},\overline{U_{s^\smallfrown 1}}\subseteq U_s^w$;
\item[(1)] $F_{s^\smallfrown 1}\subseteq F_{s^\smallfrown 0}$;
\item[(2)] $\overline{V}_s\subseteq V_\emptyset$ and $F_s\subseteq E_\emptyset$ for any $s\ne\emptyset$;
\item[(3)] for any $s,t\ne\emptyset$ with ${\rm row}(s)={\rm row}(t)$, we have $V_s=V_t$ or $\overline{V_s}\cap\overline{V_t}=\emptyset$;
\item[(4)] if ${\rm col}(s)>0$, then $\overline{V_{s^\smallfrown 0}},\overline{V_{s^\smallfrown 1}}\subseteq V_s$ and $F_{s^\smallfrown 0}\subseteq F_s$;
\item[(5)] $E_s\subseteq F_s$;
\item[(6)] $E_s\cap U_s^w\ne\emptyset$ for each $w$;
\item[(7)] $U_s=U_s^s$, and $U_s^{w_1}\supseteq U_s^{w_2}$ for $w_1\preceq w_2$;
\item[(8)] if $s$ is an inheritor, then we have
$$V_s=V_{v(s)},\quad F_s=F_{v(s)},\quad E_s=E_{u(s)};$$
\item[(9)] if $s$ is an innovator, then $\overline{V_s}\cap\overline{V_t}=\emptyset$ for any $t$ with $t\prec s$ and ${\rm row}(t)={\rm row}(s)$; furthermore, there exists $n\ge{\rm lh}(s)$ such that $f_n(U_s)\subseteq V_s$;
\item[(10)] if $s\ne\emptyset$, by letting $V_s^-=\left\{\begin{array}{ll}V_{s\upharpoonright({\rm lh}(s)-1)}, &{\rm row}(s)>0,\cr V_{\emptyset}, &{\rm row}(s)=0,\end{array}\right.$
$$P_s^r=V_s^-\setminus\bigcup\{\overline{V_t}:t\preceq r,\;{\rm row}(t)={\rm row}(s)\},$$
$$\mathcal P_s^r=\langle P_{s\upharpoonright({\rm lh}(s)-{\rm row}(s))}^r,\cdots,P_s^r,V_s\rangle,$$
$$\mathcal F_s=\langle F_{s\upharpoonright({\rm lh}(s)-{\rm row}(s))},\cdots,F_s,E_s\rangle,$$
then for any $t\preceq s\prec t^\smallfrown 0$, we have
\\ (a) if $t^\smallfrown 0$ is an innovator, then $\mathcal F_t$ is $\mathcal P_t^s$-sharp in $U_t^s$;
\\ (b) if $t^\smallfrown 0$ is an inheritor, then $\mathcal F_{u(t^\smallfrown 0)}$ is $\mathcal P_{u(t^\smallfrown 0)}^s$-sharp in $U_t^s$.
\end{enumerate}

When we complete the construction, for any $z\in 2^\omega$, we set $\psi(z)$ to be the unique element of $\bigcap_kU_{z\upharpoonright k}$. From (0) and (7), $\psi$ is continuous embedding from $2^\omega$ to $X$. Put
$$G_m=\bigcup_{{\rm row}(t)=m}V_t,\quad G=\bigcap_{m<\omega}G_m.$$

If $z\in\Omega$, there exist $i_0<\omega$ and a strictly increasing sequence $j_k>0$ with $z(\ulcorner i_0,j_k\urcorner)=1$ for any $k<\omega$. Since $z\upharpoonright(\ulcorner i_0,j_k\urcorner+1)$ is an innovator, by (9), there is $n_k>\ulcorner i_0,j_k\urcorner$ such that $f_{n_k}(\psi(z))\in V_{z\upharpoonright(\ulcorner i_0,j_k\urcorner+1)}$. It follows from (9) that $f(\psi(z))\notin V_t$ whenever ${\rm row}(t)=i_0$. Thus
$$f(\psi(z))\notin G_{i_0}\supseteq G.$$

If $z\notin\Omega$, we show that $f(\psi(z))\in G$. For any $m<\omega$, there exists $J_m<\omega$ such that $z(\ulcorner i,j\urcorner)=0$ for any $i\le m$ and any $j>J_m$. So $z\upharpoonright(\ulcorner m,j\urcorner+1)$ is an inheritor for any $j>J_m$. Denote
$$V_m=V_{z\upharpoonright(\ulcorner m,J_m\urcorner+1)},\quad u^m_j=u(z\upharpoonright(\ulcorner m,j\urcorner+1)).$$
By (8), we have $V_m=V_{z\upharpoonright(\ulcorner m,j\urcorner+1)}$ for all $j>J_m$. Since all $u^m_j$ are innovators, by (9) we can find an $n_j\ge{\rm lh}(u^m_j)$ such that $f_{n_j}(\psi(z))\in V_{u^m_j}$. By (4) and (8) we have $f_{n_j}(\psi(z))\in V_m$ for all $j>J_m$. So $f(\psi(z))\in\overline{V_m}$ for each $m$. Again by (4) we have $\overline{V_{m+1}}\subseteq V_m$ for any $m<\omega$. So
$$f(\psi(z))\in\overline{V_{m+1}}\subseteq V_m\subseteq G_m$$
for all $m<\omega$. It follows that $f(\psi(z))\in G$.

Now we turn to the construction.

First, set $D,P,h,\mathcal B_P$ as follows:
\begin{enumerate}
\item[(i)] $P=Y,D=X,h=f$;
\item[(ii)] $\mathcal B_P$ is a countable basis of $Y$.
\end{enumerate}
Applying Lemma~\ref{PDtoVFE} with these $D,P,h,\mathcal B_P$, we get an open set $V$ of $Y$ and two closed sets $E\subseteq F$ of $X$. Then put
$$V_\emptyset=V,\quad F_\emptyset=F,\quad E_\emptyset=E,\quad U_\emptyset=X.$$

Secondly, assume that we have constructed $V_t,E_t,F_t,U_t,$ and $U_t^w$ for $t,w\prec s^\smallfrown 0$. We will define for $s^\smallfrown 0$ and $s^\smallfrown 1$. We consider the following two cases:

{\sl Case 1.} Assume $s^\smallfrown 0$ is an inheritor. Let $v=v(s^\smallfrown 0),u=u(s^\smallfrown 0)$. Put
$$V_{s^\smallfrown 0}=V_v,\quad F_{s^\smallfrown 0}=F_v,\quad E_{s^\smallfrown 0}=E_u.$$
Note that either $s=u$, or $s$ is also an inheritor with $u(s)=u$, so $E_s=E_u$. By (7), $E_u\cap U_s^{s_M}\ne\emptyset$, so we can define an open set $U_{s^\smallfrown 0}$ such that
$$\overline{U_{s^\smallfrown 0}}\subseteq U_s^{s_M},\quad U_{s^\smallfrown 0}\cap E_u\ne\emptyset,\quad{\rm diam}(\overline{U_{s^\smallfrown 0}})\le 2^{-({\rm lh}(s)+1)}.$$
We set $U_{s^\smallfrown 0}^{s^\smallfrown 0}=U_{s^\smallfrown 0}$ and $U_t^{s^\smallfrown 0}=U_t^{s_M}$ for other $t$.

To check (0)--(10), the only nontrivial one is (10)(a) with $t=s^\smallfrown 0$. Note that, if $s^\smallfrown 0^\smallfrown 0$ is innovator, then ${\rm col}(s^\smallfrown 0^\smallfrown 0)=0$, i.e., $u=v$, so $\mathcal P_{s^\smallfrown 0}^{s^\smallfrown 0}=\mathcal P_u^{s_M}$ and $\mathcal F_{s^\smallfrown 0}=\mathcal F_u$. Since (10) holds for $s_M$, it holds for $s^\smallfrown 0$ too.

By shrinking, we may assume $E_u\cap(U_s^{s_M}\setminus\overline{U_{s^\smallfrown 0}})\ne\emptyset$. By (10)(b) and Proposition~\ref{shp1}, we see $\mathcal F_u$ is $\mathcal P_u^{s_M}$-sharp in $U_s^{s_M}\setminus\overline{U_{s^\smallfrown 0}}$. By Proposition~\ref{shp2}, $\mathcal F_u\upharpoonright({\rm row}(v)+1)$ is $\mathcal P_u^{s_M}\upharpoonright({\rm row}(v)+1)$-sharp in some open set $U\subseteq(U_s^{s_M}\setminus\overline{U_{s^\smallfrown 0}})$. Thus $F_v$ is $P_v^{s_M}$-sharp in $U$, and hence
$$f\upharpoonright(f^{-1}(P_s^{s_M})\cap F_v\cap U)\notin\d13.$$

We inductively define $V^l,E^l$, and $F^l$ for each $l<\omega$. Denote $F^{-1}=F_v$. Assume that we have defined $V^k,E^k$, and $F^k$ for $k<l$. Set $D,P,h,\mathcal B_P$ as follows:
\begin{enumerate}
\item[(i)] $P=P_s^{s_M}\setminus\bigcup_{k<l}\overline{V^k}$, $D=F^{l-1}\cap U\cap f^{-1}(P)$, $h=f\upharpoonright D$;
\item[(ii)] $\mathcal B_P$ is a countable basis of $P$ such that $\overline{V}\subseteq P$ for each $V\in\mathcal B_P$.
\end{enumerate}
Applying Lemma~\ref{PDtoVFE} with these $D,P,h,\mathcal B_P$, we get an open set $V$ of $Y$ and two closed sets $E\subseteq F\subseteq\overline D\subseteq F^{l-1}$ with $E\cap U\supseteq E\cap D\ne\emptyset$. Denote $V^l=V, E^l=E$, and $F^l=F$. This complete the induction.

For $s\prec t\preceq s^\smallfrown 0$, if $t^\smallfrown 0$ is an innovator, it follows from (10)(a) that $\mathcal F_t$ is $\mathcal P_t^{s^\smallfrown 0}$-sharp in $U_t^{s^\smallfrown 0}$. By Lemma~\ref{shp}, we can find an natural number $L_t$ such that, for any $l\ge L_t$, $\mathcal F_t$ is $\mathcal P_t^{s^\smallfrown 0}\setminus\overline{V^l}$-sharp in some $U_t^l\subseteq U_t^{s^\smallfrown 0}$. If $t^\smallfrown 0$ is an inheritor, from (10)(b) and Lemma~\ref{shp}, we can also find an natural number $L_t$ such that, for any $l\ge L_t$, $\mathcal F_{u(t^\smallfrown 0)}$ is $\mathcal P_t^{s^\smallfrown 0}\setminus\overline{V^l}$-sharp in some open set $U_t^l\subseteq U_t^{s_M}$. Then we set
$$L=\max\{L_t:s\prec t\preceq s^\smallfrown 0\}$$
and $U_t^{s^\smallfrown 1}=U_t^L$ for $t\preceq s^\smallfrown 0\prec t^\smallfrown 0$, i.e., for $s\prec t\preceq s^\smallfrown 0$.

From Lemma~\ref{PDtoVFE} and $F^L\subseteq E_s$, we can see that $(\mathcal F_s\upharpoonright{\rm row}(s^\smallfrown 1))^\smallfrown F^L{}^\smallfrown E^L$ is $(\mathcal P_s^{s_M}\setminus\overline{V^L})^\smallfrown V^L$-sharp in $U$.

Pick an $x\in(f^{-1}(V^L)\cap E^L\cap U)$. Since $f(x)\in V^L$, there is an $n>{\rm lh}(s)$ such that $f_n(x)\in V^L$. Then we can define an open set $U_{s^\smallfrown 1}$ such that
$$\overline{U_{s^\smallfrown 1}}\subseteq U,\quad f_n(U_{s^\smallfrown 1})\subseteq V^L,\quad x\in U_{s^\smallfrown 1},\quad{\rm diam}(\overline{U_{s^\smallfrown 1}})\le 2^{-({\rm lh}(s)+1)}.$$
Then put
$$V_{s^\smallfrown 1}=V^L,\quad E_{s^\smallfrown 1}=E^L,\quad F_{s^\smallfrown 1}=F^L,$$
and $U_{s^\smallfrown 1}^{s^\smallfrown 1}=U_{s^\smallfrown 1}$.

{\sl Case 2.} Assume $s^\smallfrown 0$ is an innovator. Since $s\preceq s_M\prec s^\smallfrown 0$, by (10)(a), we have $\mathcal F_s$ is $\mathcal P_s^{s_M}$-sharp in $U_s^{s_M}$. Thus $E_s$ is $V_s$-sharp in $U_s^{s_M}$, and hence
$$f\upharpoonright(f^{-1}(V_s)\cap E_s\cap U_s^{s_M})\notin\d13.$$
Set $D,P,h,\mathcal B_P$ as follows:
\begin{enumerate}
\item[(i)] $P=V_s$, $D=E_s\cap U_s^{s_M}\cap f^{-1}(P)$, $h=f\upharpoonright D$;
\item[(ii)] $\mathcal B_P$ is a countable basis of $P$ such that $\overline{V}\subseteq P$ for each $V\in\mathcal B_P$.
\end{enumerate}
Applying Lemma~\ref{PDtoVFE} with these $D,P,h,\mathcal B_P$, we get an open set $V$ of $Y$ and two closed sets $E\subseteq F\subseteq\overline D\subseteq E_s$ with $E\cap U_s^{s_M}\supseteq E\cap D\ne\emptyset$. From Lemma~\ref{PDtoVFE} and $F\subseteq E_s$, we can see that $(\mathcal F_s\upharpoonright{\rm row}(s^\smallfrown 0))^\smallfrown F^\smallfrown E$ is $(\mathcal P_s^{s_M}\setminus\overline{V})^\smallfrown V$-sharp in $U_s^{s_M}$.

Pick an $x\in(f^{-1}(V)\cap E\cap U_s^{s_M})$. Since $f(x)\in V$, there is an $n>{\rm lh}(s)$ such that $f_n(x)\in V$. Then we can an open set $U_{s^\smallfrown 0}$ such that
$$\overline{U_{s^\smallfrown 0}}\subseteq U_s^{s_M},\quad f_n(U_{s^\smallfrown 0})\subseteq V,\quad x\in U_{s^\smallfrown 0},\quad{\rm diam}(\overline{U_{s^\smallfrown 0}})\le 2^{-({\rm lh}(s)+1)}.$$
Then put
$$V_{s^\smallfrown 0}=V,\quad E_{s^\smallfrown 0}=E,\quad F_{s^\smallfrown 0}=F,$$
and $U_{s^\smallfrown 0}^{s^\smallfrown 0}=U_{s^\smallfrown 0}$, and $U_t^{s^\smallfrown 0}=U_t^{s_M}$ for other $t$.

To check (0)--(10), it is trivial for $s=\emptyset$. For $s\ne\emptyset$, the only nontrivial clauses are (3), (9), and (10). Note that ${\rm row}(s^\smallfrown 0)>0$, so $V_{s^\smallfrown 0}^-=V_s$. Note also that either $s$ is also an innovator, or ${\rm col}(s^\smallfrown 0)=0$, i.e., $u(s)=v(s)$. In both cases, (4) and (9) imply that there is no $t\prec s^\smallfrown 0$ such that ${\rm row}(t)={\rm row}(s^\smallfrown 0)$ and $V_t^-=V_s$. So (3) and (9) hold. Therefore, $P_{s^\smallfrown 0}^{s^\smallfrown 0}=V_s\setminus\overline{V}$, thus $\mathcal P_{s^\smallfrown 0}^{s^\smallfrown 0}=(\mathcal P_s^{s_M}\setminus\overline{V})^\smallfrown V$. Similarly, $\mathcal P_t^{s^\smallfrown 0}=\mathcal P_t^{s_M}$ and $\mathcal P_{u(t^\smallfrown 0)}^{s^\smallfrown 0}=\mathcal P_{u(t^\smallfrown 0)}^{s_M}$ for $t\prec s^\smallfrown 0\prec t^\smallfrown 0$. Since (10) holds for $s_M$, it holds for $s^\smallfrown 0$ too.

By shrinking, we may assume $F\cap(U_s^{s_M}\setminus\overline{U_{s^\smallfrown 0}})\ne\emptyset$. By Lemma~\ref{PDtoVFE},
$$f\upharpoonright(f^{-1}(V_s\setminus\overline{V})\cap F\cap(U_s^{s_M}\setminus\overline{U_{s^\smallfrown 0}}))\notin\d13.$$
Now we define for $s^\smallfrown 1$ similar to the way in Case 1.
\end{proof}

\begin{theorem}\label{3to1}
Let $X$ be a Polish space, $Y$ a separable metrizable space, and let $f:X\to Y$. Then
$$f^{-1}{\bf\Sigma}^0_3\subseteq{\bf\Sigma}^0_3\iff f\in\d13.$$
\end{theorem}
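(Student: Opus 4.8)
The plan is to derive Theorem~\ref{3to1} by assembling the two substantial results already proved, namely Theorem~\ref{3to2} (which governs the passage from ${\bf\Sigma}^0_2$-preimages) and Theorem~\ref{3to1part} (which handles Baire class $1$ maps on a Polish domain), and then gluing the resulting local decompositions together with Proposition~\ref{proposition}. Once those two theorems are granted, the present statement is essentially bookkeeping; the single genuine point is that the pieces produced by Theorem~\ref{3to2} are only $\Delta^0_3$, hence need not be Polish, so Theorem~\ref{3to1part} cannot be invoked on them directly.

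First I would dispose of the ``$\Leftarrow$'' direction. If $f\in\d13$, fix a partition $(X_k)$ of $X$ into $\Delta^0_3$ sets with each $f\upharpoonright X_k$ continuous. For any ${\bf\Sigma}^0_3$ set $A\subseteq Y$ we have $f^{-1}(A)=\bigcup_k (f\upharpoonright X_k)^{-1}(A)$, and each $(f\upharpoonright X_k)^{-1}(A)$ is ${\bf\Sigma}^0_3$ relative to $X_k$; since $X_k\in\Delta^0_3\subseteq{\bf\Sigma}^0_3$, such a set is ${\bf\Sigma}^0_3$ in $X$, and a countable union of these is again ${\bf\Sigma}^0_3$. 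Hence $f^{-1}{\bf\Sigma}^0_3\subseteq{\bf\Sigma}^0_3$.

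For the forward direction, assume $f^{-1}{\bf\Sigma}^0_3\subseteq{\bf\Sigma}^0_3$. Since ${\bf\Sigma}^0_2\subseteq{\bf\Sigma}^0_3$, this gives $f^{-1}{\bf\Sigma}^0_2\subseteq{\bf\Sigma}^0_3$, so Theorem~\ref{3to2} yields $f\in\d23$, i.e.\ a partition $(X_k)$ of $X$ into $\Delta^0_3$ sets with each $f\upharpoonright X_k$ being ${\bf\Sigma}^0_2$-measurable, that is, of Baire class $1$. The crucial move is to refine the non-Polish pieces: writing each $X_k\in{\bf\Sigma}^0_3$ as a countable union $X_k=\bigcup_j Y_{k,j}$ with $Y_{k,j}\in{\bf\Pi}^0_2$, every $Y_{k,j}$ is a $G_\delta$ subspace of the Polish space $X$, hence is itself Polish. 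Each $f\upharpoonright Y_{k,j}$ is a restriction of the Baire class $1$ map $f\upharpoonright X_k$, so it is again of Baire class $1$; moreover, for any ${\bf\Sigma}^0_3$ set $A\subseteq Y$ we have $(f\upharpoonright Y_{k,j})^{-1}(A)=Y_{k,j}\cap f^{-1}(A)$, which is ${\bf\Sigma}^0_3$ relative to $Y_{k,j}$ because $f^{-1}(A)\in{\bf\Sigma}^0_3$. Thus $f\upharpoonright Y_{k,j}\colon Y_{k,j}\to Y$ meets the hypotheses of Theorem~\ref{3to1part}, which gives $f\upharpoonright Y_{k,j}\in\d13$.

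Finally, the countable family $(Y_{k,j})$ consists of ${\bf\Sigma}^0_3$ subsets covering $X$, with each $f\upharpoonright Y_{k,j}\in\d13$. Applying Proposition~\ref{proposition}, implication (iii)$\Rightarrow$(i) with $\gamma=1$ and $\delta=3$, we conclude $f\in\d13$. The main obstacle, and in fact the only nonroutine step, is the domain-refinement from $\Delta^0_3$ to $G_\delta$ pieces that makes Theorem~\ref{3to1part} applicable; checking that both the Baire class $1$ property and the relative ${\bf\Sigma}_{3,3}$ condition persist under restriction is then immediate, and the remaining heavy lifting has already been carried out in Theorems~\ref{3to2} and~\ref{3to1part}.
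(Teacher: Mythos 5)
Your proposal is correct and follows essentially the same route as the paper: apply Theorem~\ref{3to2} to get $f\in\d23$, refine the pieces to $G_\delta$ (hence Polish) subspaces, apply Theorem~\ref{3to1part} to each restriction, and glue via Proposition~\ref{proposition}. The paper compresses the refinement and the verification of the hypotheses of Theorem~\ref{3to1part} into a single ``i.e.,'' whereas you spell these steps out explicitly, but the argument is identical.
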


\begin{proof}
The ``$\Leftarrow$'' part is trivial, we only prove the ``$\Rightarrow$'' part.

Since $f^{-1}{\bf\Sigma}^0_3\subseteq{\bf\Sigma}^0_3$ implies $f^{-1}{\bf\Sigma}^0_2\subseteq{\bf\Sigma}^0_3$, it follows from Theorem~\ref{3to2} that $f\in\d23$, i.e., there exists a sequence of $G_\delta$ set $X_n$ such that $\bigcup_nX_n=X$ and each $f\upharpoonright X_n$ is of Baire class 1. Then Theorem~\ref{3to1part} gives $f\upharpoonright X_n\in\d13$. Therefore, we have $f\in\d13$.
\end{proof}

\begin{corollary}
Let $X$ be a Polish space, $Y$ a separable metrizable space, and let $f:X\to Y$. If $f\notin\d13$, then there exists a Cantor set $C\subseteq X$ such that $f\upharpoonright C\notin\d13$.
\end{corollary}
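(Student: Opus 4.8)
The plan is to imitate the proof of Corollary~\ref{cor1}, with one extra reduction: the embedding that pulls $\Omega$ back through $f$ is produced in Theorem~\ref{3to1part} only for functions of Baire class $1$, whereas here $f$ is arbitrary. The crucial elementary fact I will use is the inclusion $\d13\subseteq\d23$ (a continuous restriction is in particular of Baire class $1$, so any ${\bf\Delta}^0_3$ partition witnessing $f\in\d13$ also witnesses $f\in\d23$), so that a Cantor set witnessing $f\upharpoonright C\notin\d23$ automatically witnesses $f\upharpoonright C\notin\d13$. I would first split on whether $f\in\d23$.

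If $f\notin\d23$, then Corollary~\ref{cor1} immediately yields a Cantor set $C\subseteq X$ with $f\upharpoonright C\notin\d23$, and the inclusion above gives $f\upharpoonright C\notin\d13$, which settles this case. So assume $f\in\d23$. Exactly as in the proof of Theorem~\ref{3to1}, I would write $X=\bigcup_n X_n$ with each $X_n$ a $G_\delta$ set and each $f\upharpoonright X_n$ of Baire class $1$. Since $f\notin\d13$, Proposition~\ref{proposition} prevents all the restrictions $f\upharpoonright X_n$ from lying in $\d13$, so I may fix $n$ with $g:=f\upharpoonright X_n\notin\d13$. Being a $G_\delta$ subspace of a Polish space, $X_n$ is Polish, and $g$ is of Baire class $1$; thus the hypotheses of Theorem~\ref{3to1part} are met by $g$ on the Polish space $X_n$.

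I would then run the construction inside the proof of Theorem~\ref{3to1part} on $g$, obtaining a continuous embedding $\psi:2^\omega\to X_n$ and a $G_\delta$ set $G\subseteq Y$ with $\psi^{-1}(g^{-1}(Y\setminus G))=\Omega$, and set $C=\psi(2^\omega)$. To finish I check $f\upharpoonright C=g\upharpoonright C\notin\d13$: since $g$ is total on $X_n\supseteq C$ and $\psi$ is a homeomorphism onto $C$, the set $(f\upharpoonright C)^{-1}(G)=\psi(2^\omega\setminus\Omega)$ is homeomorphic to the ${\bf\Pi}^0_3$-complete set $2^\omega\setminus\Omega$, hence is itself ${\bf\Pi}^0_3$-complete and in particular not ${\bf\Sigma}^0_3$. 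As $G$ is $G_\delta\subseteq{\bf\Sigma}^0_3$, this shows $(f\upharpoonright C)^{-1}{\bf\Sigma}^0_3\not\subseteq{\bf\Sigma}^0_3$, so Theorem~\ref{3to1} gives $f\upharpoonright C\notin\d13$.

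The only delicate point, and the one I would take most care with, is the reduction in the second paragraph: the Baire-class-$1$ construction of Theorem~\ref{3to1part} cannot be applied to $f$ directly, so one must arrange for the Cantor set to sit inside a single $G_\delta$ piece $X_n$ on which $f$ is of Baire class $1$. Choosing $n$ so that $f\upharpoonright X_n\notin\d13$ is precisely what makes the construction run and produces a non-decomposable restriction; everything else is a routine transport of completeness through the homeomorphism $\psi$.
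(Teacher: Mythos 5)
Your proposal is correct and follows the paper's own proof essentially verbatim: split on whether $f\in\d23$, handle the first case via Corollary~4.6's analogue for $\d23$, and in the second case locate a $G_\delta$ piece $X_n$ with $f\upharpoonright X_n\notin\d13$ and take $C$ to be the image of the embedding from Theorem~4.5. The extra details you supply (Polishness of $X_n$, and the transport of ${\bf\Pi}^0_3$-completeness through $\psi$) are exactly what the paper leaves implicit.
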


\begin{proof}
If $f\notin\d23$, by Corollary~\ref{cor1}, there exists a Cantor set $C\subseteq X$ such that $f\upharpoonright C\notin\d23$. It is clear that $f\upharpoonright C\notin\d13$.

If $f\in\d23$, i.e., there exists a sequence of $G_\delta$ set $X_n$ such that $\bigcup_nX_n=X$ and each $f\upharpoonright X_n$ is of Baire class 1, then there is some $X_n$ such that $f\upharpoonright X_n\notin\d13$.  Let $\psi$ be the continuous embedding defined in Theorem~\ref{3to1part}. Put $C=\psi(2^\omega)$.
\end{proof}

\end{document}